\documentclass[reqno]{article}
\usepackage{amsmath,amsfonts,amssymb,amsthm,cite}
\newtheorem{theorem}{Theorem}
\newtheorem{lemma}[theorem]{Lemma}

\newtheorem{claim}[theorem]{Claim}
\theoremstyle{definition}
\newtheorem{remark}{Remark}
\newtheorem{definition}{Definition}

\newcommand\eps{\varepsilon}
\renewcommand\le{\leqslant}
\renewcommand\ge{\geqslant}
\newcommand\cc{{\mathrm{c}}}
\newcommand\E{\operatorname{\mathbb E{}}}
\renewcommand\Pr{\operatorname{\mathbb P{}}}
\newcommand\pto{\overset{\mathrm{p}}{\to}}

\newcommand\dd{{\mathrm d}}
\newcommand\dx{{\dd x}}
\newcommand\bb[1]{\bigl(#1\bigr)}

\newcommand\floor[1]{\lfloor #1 \rfloor}

\newcommand\symdiff{\bigtriangleup}
\newcommand\cF{\mathcal{F}}

\newcommand\cS{{\mathcal S}}
\newcommand\cT{{\mathcal T}}
\newcommand\cC{{\mathcal C}}

\newcommand\tA{\widetilde{A}}
\newcommand\tE{\widetilde{E}}
\newcommand\tT{\widetilde{T}}
\newcommand\tN{\widetilde{N}}
\newcommand\tC{\widetilde{C}}
\newcommand\tS{\widetilde{S}}
\newcommand\tX{\widetilde{X}}
\newcommand\tY{\widetilde{Y}}
\newcommand\hI{\hat{I}}
\newcommand\tmu{\widetilde{\mu}}
\newcommand\bm{\mathbf{m}}
\newcommand\tc{t_{\mathrm{c}}}
\newcommand\good{W}
\newcommand\hg{\hat{g}}
\newcommand\hF{\hat{F}}
\newcommand\dt{{\mathrm{d}t}}
\newcommand\dnu{{\mathrm{d}\nu}}
\newcommand\kmax{k_{\mathrm{max}}}
\newcommand\op{o_{\mathrm{p}}}
\newcommand\Op{O_{\mathrm{p}}}
\newcommand\Var{\mathrm{Var}}%

\begin{document}
\title{Counting subgraphs in bounded-size Achlioptas processes}%
\author{Mihyun Kang\thanks{Institute of Discrete Mathematics, Graz University of Technology, Steyrergasse 30, 8010 Graz, Austria.
E-mail: {\tt kang@math.tugraz.at}. Supported in part by the Austrian Science Fund (FWF) [10.55776/F1002].}\ \ and Oliver Riordan%
\thanks{Mathematical Institute, University of Oxford, Andrew Wiles Building, Woodstock Road, Oxford OX2 3GG, UK. E-mail: {\tt oliver.riordan@maths.ox.ac.uk}.}%
\ \thanks{For the purpose of open access, the authors have applied a CC BY public copyright licence to any author accepted manuscript arising from this submission.}}%
\date{May 9, 2026}%

\maketitle

\begin{abstract}
Achlioptas processes such as the Bohman--Frieze process are much harder to analyse 
than the classical Erd\H os--R\'enyi process, due to the dependence between
edges added at different stages. This dependence means that most analysis
so far is dynamic, often based on the differential equation method. In the Erd\H os--R\'enyi
case there is an alternative static approach,
pioneered by Erd{\H o}s, R\'enyi and Bollob\'as, 
based on evaluating the expectation (and higher moments)
of various subgraph counts, and using this to study the component structure.
Here we show that this latter approach can be applied (with some complications)
to the Bohman--Frieze process. For example, we are able to show that the expected number $\mu_{k,t,n}$ of $k$-vertex tree components after $tn$ steps satisfies (essentially)
$\mu_{k,t,n}=c_{k,t}n(1+O(k/\sqrt{n}))$.
Our method
gives a very complicated formula for $c_{k,t}$, which seems to be unusable. However,
since $c_{k,t}$ does not depend on $n$, we may use recent results obtained
by the differential equation method and branching process analysis to find the asymptotics
of $c_{k,t}$ as $k\to\infty$. The latter results also give a formula
for $\mu_{k,t,n}$ of the form $c_{k,t}n$
plus an error term,
with a much more usable description of $c_{k,t}$ but a much worse error term. We
combine the best of both worlds to prove a number of new results about the process near
criticality. In particular, we obtain extremely sharp bounds on the size of the
largest non-giant component near criticality, including the limiting distribution
of its fluctuations.
\end{abstract}%

\section{Introduction}
We study the evolution of random graph processes which are based on the paradigm
of the power of two choices. The processes we consider begin with an empty graph
on $n$ vertices. In each subsequent step two potential edges are chosen randomly,
and we select one edge to be included in the evolving graph. Such processes were suggested by Dimitris Achlioptas around 2000, and are now called
\emph{Achlioptas processes}. An important special case is the class of
\emph{bounded-size rules}, where the decision which of the offered edges to select depends
only on the sizes of the (up to four) components involved, with all sizes
above some constant being treated the same.
The phase transition in Achlioptas processes,
in particular for bounded-size rules, has received considerable
attention~\cite{BBWcrt,BBWsub,BBWagg,BF01,BKgiant,DKP,ELMPSconn,JS12,KPconn,KPS,KPSconjecture,KLSsub,KLSHam,RWsci,RW12,RWself,RW13,RWsub,RW17,JRW18,SS,SW07}.
In this paper we consider the following (minor variant of the) Bohman--Frieze process,
a simple example of a bounded-size rule: start with
an empty graph $G_0$ on $n$ vertices, say with vertex set $[n]=\{1,2,\ldots,n\}$.
At step $m$, i.e., given $G_{m-1}$,
pick two `potential edges' $e_{m,1}$ and $e_{m,2}$ independently and uniformly from
all $\binom{n}{2}$ possible edges. If $e_{m,1}$ joins two vertices that are isolated in $G_{m-1}$,
then set $e_m=e_{m,1}$; otherwise set $e_m=e_{m,2}$. In either case, set $G_m=G_{m-1}\cup \{e_m\}$.
Note that multiple edges are possible but, since we will always consider $m=O(n)$,
there will be rather few of them, and they turn out not to matter.
One could consider variants where $e_{m,1}$ and $e_{m,2}$ are chosen from edges
not present in $G_{m-1}$, and perhaps with $e_{m,2}\ne e_{m,1}$.
Our methods apply to these variants with appropriate minor modifications.
Note that the sequence $(G_m)$ of course depends on $n$; when necessary we
write $G_m^{(n)}$ to make this clear.%

\begin{definition}[Asymptotic notation]
Throughout the paper we use the following standard asymptotic notation: $\pto$ denotes convergence
in probability. For a sequence of random variables $X_n$ and a deterministic function $f(n)$,
we write $X_n=\Op(f(n))$ if $X_n/f(n)$ is bounded in probability, and $X_n=\op(f(n))$ if $X_n/f(n)\pto 0$.
If $E_n$ is a sequence of events (often written without the explicit dependence on $n$), we say
that $E_n$ holds \emph{with high probability} or \emph{whp} if $\Pr(E_n)\to 1$ as $n\to\infty$. 
All asymptotics in this paper are as  $n\to\infty$.
\end{definition}

Spencer and Wormald~\cite{SW07} showed
using the differential equation method that, for any bounded-size rule,
there are functions $\rho_k(t)$ such that,
for any  $k\ge 1$ and $t\in [0,\infty)$,
\begin{equation}\label{conv}
 N_k(G_{\floor{tn}})/n \pto \rho_k(t)
\end{equation}
as $n\to\infty$, where $N_k(G)$ denotes the number of vertices of a graph $G$
in $k$-vertex components.
From this and a result of Warnke and the second author~\cite[Theorem 3]{RW12}
it follows that there exists a continuous, increasing function $\rho: [0,\infty)\to [0, 1]$ such that for any $t\in [0,\infty)$ we have
\begin{equation}\label{scalinglimit}
L_1(G_{{\floor{tn}}})/n \pto \rho(t),
\end{equation}
where $L_j(G)$ denotes the number of vertices in the $j$th largest component of a graph $G$ for $j\in \mathbb N$.
The function $\rho(t)$ is zero up to a certain `critical time' $\tc$, while there is a positive constant $\xi$ such
that
\begin{equation}\label{rhosim1}
 \rho(\tc+\eps) \sim \xi\eps
\end{equation}%
as $\eps\to 0$ from above; see~\cite{JS12,DKP} or~\cite[Theorem 2.5]{RW17}, where more precise results are proved.
The constant $\xi$ is twice the constant $\gamma$ appearing in Theorem 3.5 of Janson and Spencer~\cite{JS12},
the factor of 2 coming from the different normalization of the number of steps.%

Building on results of theirs with Janson~\cite{JRW18}, Riordan and Warnke~\cite{RW17} established, among many other results, the following asymptotics for any bounded-size $\ell$-vertex rule, which includes the Bohman--Frieze process. %
\begin{theorem}[Theorem 2.9,~\cite{RW17}]\label{RW}\footnote{The statement is somewhat simplified from that in~\cite{RW17}, since here we only consider the Bohman--Frieze process. Hence the set ${\cal S}_{\cal R}$ of `reachable' component
sizes consists of all positive integers.}
There exist a constant $\eps_0>0$ and non-negative analytic functions  $\gamma(t)$ and $\delta(t)$ on
\begin{equation}\label{I0def}
 I_0=[\tc-\eps_0,\tc+\eps_0]
\end{equation}
such that
\[
 \rho_k(t) =(1+O(1/k))  \gamma(t) k^{-3/2} e^{-\delta(t) k},
\]
uniformly in $k\ge 1$ and $t\in I_0$, with  $\gamma(\tc), \delta''(\tc)>0$ and $\delta(\tc)=\delta'(\tc)=0$. 
\end{theorem}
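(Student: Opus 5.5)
This statement is imported from~\cite{RW17}, so the plan is to reconstruct the route taken there rather than find a new one. Throughout, the Bohman--Frieze process is treated as a bounded-size rule with cutoff $K=1$, since the rule only distinguishes isolated vertices from all others.

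\textbf{Step 1: reduce to an inhomogeneous merging process.} The first step uses the Spencer--Wormald differential equations~\eqref{conv} for the finitely many small-component densities, here just $\rho_1(t)$. Conditional on this trajectory, the evolution of components of size above $K$ is a size-biased coalescence. Each step either merges two large components (rate proportional to the product of their sizes), or attaches an isolated vertex or a freshly formed isolated pair to an existing component. The rates of these events are deterministic functions of $t$, determined by $\rho_1(t)$.

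\textbf{Step 2: branching process representation.} Following~\cite{JRW18}, the neighbourhood exploration from a random vertex at time $t$ is coupled with a two-type (or finite-type) branching process $\mathfrak X_t$. The offspring consist of components created at earlier times $s<t$, with types recording the creation time, so the process is a continuous-type branching process. The coupling gives $\rho_k(t)=\Pr(|\mathfrak X_t|=k)$. The total progeny can then be written through a Lagrange-inversion or ``cycle lemma'' type formula: the probability that the total size equals $k$ is the $k$-th coefficient of a generating function built from an analytic kernel.

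\textbf{Step 3: saddle-point analysis of the generating function.} The aim is to show that $\sum_k \rho_k(t) z^k$ has a dominant square-root singularity at $z=e^{\delta(t)}$. The radius of convergence $e^{\delta(t)}\ge 1$ is found from the fixed-point equation for the offspring generating function, and $\delta$ is analytic in $t$ because the kernel depends analytically on $t$ and on the parameter. Transfer theorems then give coefficients $\gamma(t)k^{-3/2}e^{-\delta(t)k}(1+O(1/k))$ uniformly on a compact interval $I_0$.

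\textbf{Step 4: behaviour at criticality.} The remaining claims concern $\tc$. At $\tc$ the branching process is critical, which gives $\delta(\tc)=0$. Since $\delta\ge0$ near $\tc$, this point is a minimum and $\delta'(\tc)=0$. The strict inequality $\delta''(\tc)>0$ comes from a nondegeneracy condition: the derivative of the mean offspring operator's Perron eigenvalue in $t$ is nonzero, which is the same condition behind the linear growth in~\eqref{rhosim1}. Finally, $\gamma(\tc)>0$ follows from finite variance of the offspring.

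\textbf{Main obstacle.} The hardest step is Step 3, namely obtaining the square-root singularity uniformly in $t$ and with an $O(1/k)$ relative error. This requires showing that the kernel is analytic in a neighbourhood of the singular point, with a nonvanishing second derivative of the relevant functional equation, and doing so uniformly over $I_0$.
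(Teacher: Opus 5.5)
\textbf{Gap at Steps~2--3.} The paper gives no proof of this statement. It is imported from \cite{RW17}, and the paper only records that the argument there combines DE methods, comparison with branching processes, and the point-probability estimates of \cite{JRW18}. Your outline follows that route in broad strokes, but it has a genuine gap where the work lies, between your Steps~2 and~3.

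By your own description, $\mathfrak X_t$ is a branching process whose types are creation times in $[0,t]$. In the natural Bohman--Frieze exploration, a type is essentially the time a vertex stopped being isolated, plus which of its edges has already been used. For such a process the total-size generating functions $F_s(z)$, one for each type $s$, satisfy a nonlinear integral equation in $s$, not a scalar fixed-point equation. So Lagrange inversion, the cycle lemma and the transfer theorems for implicit equations $F=\Phi(z,F)$ do not apply as stated. Calling the process ``two-type (or finite-type)'' and then ``continuous-type'' in consecutive sentences hides exactly this point. You would need one of the following:
\begin{itemize}
\item a representation in which only one type of particle reproduces and the remaining size is carried by barren particles, which is the sesqui-type setting \cite{JRW18} is designed for;
\item a genuinely infinite-dimensional square-root-singularity argument. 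This would require a compact positive operator whose simple Perron eigenvalue depends analytically on $(t,z)$ and equals $1$ on a curve $z=e^{\delta(t)}$, with the second-order nondegeneracy holding uniformly on $I_0$, including at $\tc$ where the curve reaches $z=1$.
\end{itemize}
Neither is indicated, so Step~3, which you flag as the main obstacle, has no concrete equation to act on.

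\textbf{Effect on Step~4 and a possible alternative.} To deduce $\delta''(\tc)>0$ from \eqref{rhosim1}, you must identify $\rho(t)$ with the survival probability of $\mathfrak X_t$. You must also show that this survival probability is asymptotically a positive multiple of $\lambda(t)-1$, where $\lambda(t)$ is the Perron eigenvalue. Both again need the operator framework. If you use \eqref{rhosim1} as input, take it in the independent form of \cite{JS12}, since the \cite{RW17} version belongs to the work you are reconstructing. A minor point: in Step~1, pairs created by $e_{m,1}$ are new components of size $2$, not attachments to existing components.

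Since the statement concerns only solutions of the Spencer--Wormald equations, you could sidestep the type-space problem altogether. The generating function $P(t,z)=\sum_k\rho_k(t)z^k$ satisfies
\[
 \partial_t P = 2(1-\rho_1^2)(P-1)\,z\,\partial_z P + 2\rho_1^2(z^2-z),
\]
a forced inviscid Burgers equation whose coefficients are known from \eqref{rho1}. The square-root singularity at $z=e^{\delta(t)}$ should then arise as a fold of its characteristics; in the Erd\H os--R\'enyi case this recovers $T=ze^{c(T-1)}$. This is the kind of pure DE proof the paper speculates about, though controlling the fold uniformly through $\tc$ would still take real work.
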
%

Our main result is the following; here and in the rest of the paper we consider only the Bohman--Frieze
process, although almost all our arguments (and we expect all of our results) extend to general
bounded-size rules.%
\begin{theorem}\label{th1}
Let $k=k(n)=o(\sqrt{n})$ and $m=m(n)\in nI_0$ be integers, where $I_0$ is as in \eqref{I0def}.
There is a `good' event $\good=\good_n$
and a random variable $\tN_k=\tN_{k,n,m}$ such that $\Pr(\good)\to 1$ as $n\to\infty$,
and when $\good$ holds, then $N_k(G_m^{(n)})=\tN_k$. Moreover,
\[
 \E \tN_k \sim n \rho_k(m/n),
\]
and if $k_1(n)\le k_2(n)=o(\sqrt{n})$ are chosen so that $\lambda(n)=\sum_{k_1\le k<k_2} n\rho_k(m/n)/k\to\infty$,
then $X=\sum_{k_1\le k<k_2} N_k(G_m^{(n)})/k$ satisfies $X/\lambda(n)\pto 1$.
\end{theorem}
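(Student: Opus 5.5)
We follow the static approach described in the abstract: express $N_k(G_m)$, up to a negligible correction, as a sum over potential tree components of indicator variables, and compute the first and second moments of this sum directly. Tree components suffice because $k=o(\sqrt n)$ and $m=O(n)$. Consider a fixed vertex set $S$ with $|S|=k$ and a spanning tree $T$ on $S$. The event that $T$ is a component of $G_m$ is determined by two things: at which steps the edges of $T$ were added, and whether each was added as $e_{i,1}$ or as $e_{i,2}$; and the constraint that no other chosen edge touches $S$. The only subtlety is the Bohman--Frieze rule. Whether $e_{i,1}$ is accepted depends on whether its endpoints are isolated, and for vertices outside $S$ this depends on the rest of the graph. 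The good event $\good$ is designed to make this tractable. On $\good$ the number of isolated vertices $N_1(G_i)$ stays within $O(\sqrt{n\log n})$ of $n\rho_1(i/n)$ for all $i\le m$; this follows from the differential equation method / Spencer--Wormald \eqref{conv} with standard martingale concentration. There are also few multiple edges. We then define $\tN_k$ as the count of $k$-vertex components arising through "typical" histories, i.e.\ histories compatible with these concentration bounds. Then $\tN_k=N_k(G_m)$ on $\good$, while $\tN_k$ is measurable in a form whose expectation can be computed by conditioning step by step.

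\emph{Step 1, first moment.} For a given $S$ and history, the probability that the process realises that history is a product over steps $i$ of factors depending on $S$ and on $N_1(G_{i-1})$. Each factor is either the probability that an edge inside $S$ is chosen in the correct way, or the probability of avoiding $S$. On $\good$ we replace $N_1(G_{i-1})$ by $n\rho_1(i/n)$ with a relative error of $O(k/\sqrt n)$ accumulated over the whole product. Summing over $\binom nk$ sets, over trees and over histories gives $\E\tN_k=c_{k,t}n(1+O(k/\sqrt n)+o(1))$ for some $c_{k,t}$ independent of $n$. To identify $c_{k,t}$, compare with \eqref{conv} for fixed $k$. Since $N_k\le n$ and $\Pr(\good)\to1$, the convergence in probability $N_k/n\pto\rho_k$ together with $\tN_k=N_k$ on $\good$ forces $c_{k,t}=\rho_k(t)$, provided $\tN_k/n$ is uniformly bounded. (Define $\tN_k$ so that it is.) Continuity in $t$ handles $m/n$ varying with $n$. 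Because $c_{k,t}$ does not depend on $n$, the identification $c_{k,t}=\rho_k(t)$ holds for all $k$, and the $O(k/\sqrt n)$ error is uniform. Hence $\E\tN_k\sim n\rho_k(m/n)$ whenever $k=o(\sqrt n)$.

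\emph{Step 2, second moment for $X$.} Write $\tX=\sum_{k_1\le k<k_2}\tN_k/k$, which is the count of components of those sizes arising through typical histories. We have $\E\tX\sim\lambda$. For pairs of disjoint sets $S,S'$, the joint probability factorises up to a factor $1+O(kk'/n)$, with the same concentration argument controlling the shared dependence through $N_1$. Overlapping pairs contribute only to the diagonal term, which is $\E\tX$. The expected number of pairs of components of sizes $k,k'$ is then $\E\tN_k\E\tN_{k'}/(kk')\cdot(1+O(kk'/n+1/\sqrt n))$.

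This is the main obstacle. Summed against the weights, the $kk'/n$ errors must be $o(\lambda^2)$. They give a total of $O(\sum\rho_k\sum\rho_{k'}\,n)=O(n)$ times $\sum_k\rho_k$-type mass. By Theorem~\ref{RW}, $\sum_k k\rho_k/k$-sums are bounded, and we need this to be $o(\lambda^2)$; we will argue via $kk'/n=o(1)$ uniformly, since $k,k'=o(\sqrt n)$, which gives $(1+o(1))$ relative error directly. Thus $\Var\tX=o(\lambda^2)+O(\lambda)=o(\lambda^2)$ because $\lambda\to\infty$, and Chebyshev gives $\tX/\lambda\pto1$. Finally, $X=\tX$ on $\good$, so $X/\lambda\pto1$.

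The delicate part throughout is showing that the dependence through the global isolated-vertex count only introduces $(1+o(1))$ multiplicative errors uniformly over histories. We would handle this by conditioning on the trajectory of $N_1$ within its concentration window.
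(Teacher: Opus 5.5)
There is a genuine gap in Step~1, and it carries over to Step~2. You assert that on $\good$, replacing $N_1(G_{i-1})$ by $n\rho_1(i/n)$ costs only a relative error $O(k/\sqrt n)$, uniformly over histories. This holds for the avoidance factors. Their logarithms carry additive errors $O(k\eta/n)$ per step, with $\eta=\omega n^{-1/2}$, which sum to $O(k\eta)$.

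It fails for the factors at which a tree edge arrives. If that edge meets a vertex that is already non-isolated, its conditional probability is $2n^{-2}\bigl(1-(N_1/n)^2\bigr)+O(n^{-3})$. Since $1-\rho_1(t)^2\approx 4t$ vanishes at $t=0$, an additive window $|N_1/n-\rho_1|\le\eta$ only gives $n^{-2}\bigl(g_j(t_j)+O(\eta)\bigr)$. There is no relative control when $t_j\lesssim\eta$: what $\good$ tells you does not fix $1-(N_1/n)^2$ to within a factor $1+o(1)$ at such times.

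``Conditioning on the trajectory of $N_1$'' does not address this. The problem is not dependence; it is that the quantity being estimated is small compared with the concentration window. Separately, a window of width $\sqrt{n\log n}$ gives relative errors $k\sqrt{\log n/n}$, which is not $o(1)$ for every $k=o(\sqrt n)$. You need width $\omega\sqrt n$, with $\omega\to\infty$ slowly and chosen depending on $k$.

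The missing idea is how to control these additive errors as $k\to\infty$. This is exactly where the paper uses Theorem~\ref{RW}, which your first-moment argument never invokes. The paper's route has three steps:
\begin{enumerate}
\item Expanding the product, it bounds $|\mu_T-\mu_T^0|\le\sum_{e\in E(T)}C\eta\,\mu^+_{T-e}$. This uses the Bohman--Frieze monotonicity: deleting an edge can only increase the other $\hg$ factors, and changes $\hF$ by $O(1)$.
\item Multiplicativity of $\mu^+$ over disjoint unions, together with counting the pairs $(T,e)$ that give a fixed pair $(T_1,T_2)$, turns the total over trees into $\frac{C\eta}{2}\sum_{k_1+k_2=k}k_1k_2\mu^+_{k_1}\mu^+_{k_2}$.
\item An induction on $k$, using $\mu_k^0=\rho_k(t)/k=\Theta(k^{-5/2}e^{-\delta(t)k})$, shows this is $O(k\eta\,\mu_k^0)$ (Lemma~\ref{mkclose}). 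The $k^{-5/2}$ factor is precisely what makes the convolution $O(k\mu_k^0)$.
\end{enumerate}
The second moment then follows from exact multiplicativity of $\mu^\pm$ over disjoint unions together with that lemma.

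Alternatively, you could try to make the errors genuinely multiplicative. Early on, $n-N_1(G_i)$ is $2i$ minus a small defect, so one could prove a sharper, time-dependent concentration bound controlling $1-(N_1/n)^2$ relatively rather than additively. You would also have to handle the discretisation error for small $t_j$. That is a new lemma you would need to state and prove.

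Two smaller points:
\begin{itemize}
\item A sum of cooked indicators is not automatically bounded by $n$. For the identification $c_{k,t}=\rho_k(t)$, use a bound $\E\tN_k^2=O(n^2)$ for fixed $k$ plus Cauchy--Schwarz, as the paper does.
\item The reduction to tree components needs an actual bound such as $\E\tC_k=O(k\rho_k(t))$. This comes from noting that each redundant edge replaces a factor $1-o(1)$ by $O(n^{-2})$ and has at most $k^2m$ choices.
\end{itemize}
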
%
Roughly speaking, this result says that,
off a `global' bad event of probability $o(1)$, the number of $k$-vertex 
components is as we would expect from the differential equation asymptotics.
In fact, we shall prove a slightly more detailed version of Theorem~\ref{th1};
see Lemmas~\ref{lsofar} and~\ref{lmain}.
These more detailed results say
that for components of size $o(\sqrt{n})$, off our bad event,
the behaviour is essentially as if the number of tree
components of each size $k$ is Poisson with mean around $n\rho_k(m/n)/k$, with these numbers
independent, and with many fewer non-tree components.
Using this lemma and the component-size-gap idea of Bollob\'as~\cite{BB84}, we obtain
the following very precise bounds on the size of the largest component in much of the
subcritical regime, giving not only the asymptotics (also established in~\cite{RW17}), but 
also the limiting distribution of the fluctuations.
%
\begin{theorem}[Subcritical phase]\label{th_subcrit}
Let $m=m(n) \in \mathbb N$. Set $t=t(n)=m/n$ and $\eps=\eps(n)=t-\tc$, and suppose that $-\eps_0\le \eps<0$, 
where $\eps_0$ is as in Theorem~\ref{RW}, and
\begin{equation}\label{epscond}
 |\eps| n^{1/4} (\log n)^{-1/2} \to\infty.
\end{equation}
For any constant $x$ we have
\begin{equation}\label{L1bd}
 \Pr\left( L_1(G_m^{(n)}) \le
 \delta(t)^{-1}\left(\log(|\eps|^3n)-\frac{5}{2}\log\log(|\eps|^3n)+c(t)+x\right)  \right) \to e^{-e^{-x}}
\end{equation}
as $n\to\infty$, where $c(t)$ is a bounded function of $t$ defined in \eqref{ctdef}, and $\delta(t)$ is as
in Theorem~\ref{RW}. In particular,%
\[
 L_1(G_m^{(n)})\frac{\delta(t)}{\log(|\eps|^3n)} \pto 1.
\]
\end{theorem}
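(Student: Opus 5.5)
We prove Theorem~\ref{th_subcrit} by a Poisson approximation for the number of components with at least $k_0$ vertices, where $k_0$ is the threshold inside the probability in \eqref{L1bd}. Set $k_0=\delta(t)^{-1}(\log(|\eps|^3n)-\frac52\log\log(|\eps|^3n)+c(t)+x)$. Since $\delta(\tc)=\delta'(\tc)=0$ and $\delta''(\tc)>0$, we have $\delta(t)=\Theta(\eps^2)$. Hence $k_0=\Theta(\eps^{-2}\log(|\eps|^3 n))$, and by \eqref{epscond} this is $o(\sqrt n)$. The event in \eqref{L1bd} is that no component has at least $k_0$ vertices.

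\textbf{Split by size.} We divide component sizes into three ranges.
\begin{itemize}
\item \emph{Moderate sizes}, $k_0\le k<k_2$ with $k_2=o(\sqrt n)$. Here we use the detailed form of Theorem~\ref{th1} (Lemmas~\ref{lsofar} and~\ref{lmain}). Off the good event $\good$, the tree-component counts behave like independent Poisson variables with means close to $n\rho_k(t)/k$, and non-tree components are negligible.
\item \emph{Intermediate sizes}, $k_2\le k\le K$ with $K=Cn^{2/3}$ or similar. These must be ruled out whp.
\item \emph{Large sizes}, $k>K$. These must also be ruled out whp.
\end{itemize}

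\textbf{The Poisson mean.} By Theorem~\ref{RW}, summing over $k\ge k_0$ gives
\[
\lambda=\sum_{k\ge k_0} n\gamma(t)k^{-5/2}e^{-\delta k}(1+O(1/k))\sim \frac{n\gamma(t)k_0^{-5/2}e^{-\delta k_0}}{1-e^{-\delta}}\sim \frac{n\gamma(t)}{\delta}k_0^{-5/2}e^{-\delta k_0}.
\]
The geometric sum is accurate because $\delta k$ varies on the scale $1/\delta\ll k_0$. Substitute $k_0\sim \delta^{-1}\log(|\eps|^3n)$ and $\delta\sim \frac12\delta''(\tc)\eps^2$. The $n$ and the $|\eps|^{3}$ combine into $|\eps|^3n$, and the $(\log)^{-5/2}$ cancels the $\frac52\log\log$ term. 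This yields $\lambda\to e^{-x}$ once $c(t)$ is defined by $e^{-c(t)}=\gamma(t)\delta(t)^{3/2}/|\eps|^3$ up to lower-order factors; I expect this to be \eqref{ctdef}, and it is bounded because $\delta\asymp\eps^2$. We also need the error $k_0^{-5/2}\cdot O(\text{relative})$ to be $o(1)$. The $(1+O(1/k))$ factor is fine. The $\E\tN_k\sim n\rho_k$ statement must hold uniformly enough that the total relative error on $\lambda$ is $o(1)$. I would use the quantitative $O(k/\sqrt n)$ error mentioned in the abstract.

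\textbf{Poisson limit.} I would show, via the factorial moments of the count of trees of sizes in $[k_0,k_2)$ (which underlie Lemma~\ref{lmain}), that this count converges in distribution to $\mathrm{Po}(e^{-x})$. Then $\Pr(\text{none})\to e^{-e^{-x}}$. Theorem~\ref{th1} as stated gives only a law of large numbers, so I would expect to need the moment version from the lemmas, or to redo the $r$th factorial moment computation for disjoint tuples of trees. For non-tree components, the expected number of unicyclic and more complex components of size at least $k_0$ is $O(\sum_k \rho_k \cdot k/n\cdot n)$-type small, so they can be ignored. Taking $k_2$ just below $\sqrt n$ puts the tail mass $\sum_{k\ge k_2}$ at $n e^{-\delta k_2}\to0$, because $\delta k_2\gg\log n$ by \eqref{epscond}. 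This step is exactly where the condition $|\eps|n^{1/4}/\sqrt{\log n}\to\infty$ enters: it guarantees $\eps^2\sqrt n\gg\log n$.

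\textbf{The main obstacle: no larger components.} The hardest part is excluding components with more than $k_2$ vertices, which lies outside Theorem~\ref{th1}. I would use the component-size-gap idea of Bollob\'as~\cite{BB84}. Because there are essentially no components with sizes in $[k_2/2,k_2)$ (expected count $o(1)$), any larger component would have to appear by gluing. I would try one of two routes.
\begin{itemize}
\item Apply a sharper tail bound on $N_{\ge k_2}$ from~\cite{RW17}, such as the subcritical $L_1$ asymptotics there, which already give $L_1=O(\eps^{-2}\log(|\eps|^3n))$ whp.
\item Failing that, track through the process: a component exceeding $k_2$ must, at its first time of exceeding $k_2/2$, be formed from pieces, one of size in $[k_2/4,k_2/2]$. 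Bound the probability of this using the first-moment counts at all intermediate times $m'\le m$, with a union bound over $O(n)$ steps. This needs the expected count to be $o(1/n)$, which the exponential factor $e^{-\delta k_2}$ should provide.
\end{itemize}
The final statement, $L_1\delta/\log(|\eps|^3n)\pto1$, follows by letting $x\to\pm\infty$.
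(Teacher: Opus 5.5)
Your treatment of sizes below $\sqrt n$ matches the paper's Lemma~\ref{lsmall}. That lemma applies Lemma~\ref{lmain} with $K_1=k_0$ and $K_2=\sqrt n/\omega^2$, evaluates the tail sum by \eqref{laK}, and uses \eqref{epscond} to make $\lambda_{\sqrt n/\omega^2}=o(1)$. Non-tree components are handled by the last part of Lemma~\ref{lmain}. Your constant has a sign slip. By \eqref{laK}, $\lambda_{k_0}\sim\beta(t)e^{-c(t)-x}$ with $\beta(t)=\gamma(t)\delta(t)^{5/2}/\bigl((1-e^{-\delta(t)})|\eps|^3\bigr)$. So one needs $e^{c(t)}=\beta(t)$, which is \eqref{ctdef}. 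With your choice $e^{-c(t)}=\beta(t)$ you would get $\lambda\sim\beta(t)^2e^{-x}$.

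For components larger than $\sqrt n/\omega^2$, the paper uses your second route, packaged as Lemma~\ref{lgap}. Whp no $G_{m'}$ with $m'\le m$ has a component with between $K$ and $3K$ vertices. Since adding one edge at most doubles the largest component, no $G_{m'}$ ever has a component with more than $K$ vertices. This handles all sizes above $K$ at once, so your separate intermediate range up to $Cn^{2/3}$ is unnecessary.

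As written, though, your Route 2 has a hole. The first-moment bounds of Lemma~\ref{lmain} rest on Theorem~\ref{RW}, so they are available only for $m'\in nI_0$ and say nothing about times $m'<(\tc-\eps_0)n$. The paper covers these early times by citing \cite[Theorem 2.8]{RW17}: whp $G_{(\tc-\eps_0)n}$ has no component with more than $A\log n$ vertices. It then uses monotonicity, $G_{m'}\subseteq G_{(\tc-\eps_0)n}$.

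Your Route 1 is valid and shorter. Invoking the subcritical asymptotics of \cite{RW17} directly at time $m$ gives $L_1\le 2\delta(t)^{-1}\log(|\eps|^3n)=o(\sqrt n/\omega^2)$ whp under \eqref{eocond}, and Lemma~\ref{lsmall} then finishes the proof. The price is that it uses the full strength of the very result Theorem~\ref{th_subcrit} refines. The gap argument needs from \cite{RW17} only a crude $O(\log n)$ bound at one fixed time, in keeping with the paper's aim of depending essentially only on Theorem~\ref{RW}.
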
%
In the supercritical case we have a corresponding result for the second largest component,
with a slightly stronger condition on $\eps$. Moreover, we also
obtain good bounds on the size of the giant component when $\eps^6n\to\infty$.   
\begin{theorem}[Supercritical phase]\label{th_supcrit}
Let $m=m(n) \in \mathbb N$. Set $t=t(n)=m/n$ and $\eps=\eps(n)=t-\tc$, and suppose that $0<\eps\le\eps_0$,
where $\eps_0$ is as in Theorem~\ref{RW}, and $\eps^6 n\to\infty$.
For any constant $x$ we have
\begin{equation}\label{L2bd}
 \Pr\left( L_2(G_m^{(n)}) \le
 \delta(t)^{-1}\left(\log(\eps^3n)-\frac{5}{2}\log\log(\eps^3n)+c(t)+x\right)  \right) \to e^{-e^{-x}}
\end{equation}
as $n\to\infty$, where $c(t)$ is a bounded function of $t$ defined in \eqref{ctdef}, and $\delta(t)$ is as in Theorem~\ref{RW}.
Furthermore,
\begin{equation}\label{newL1bd}
 L_1(G_m^{(n)}) = \rho(t)n+\Op(\eps^{-1/2}n^{3/4}) = (1+\op(1))\rho(t)n.
\end{equation}
\end{theorem}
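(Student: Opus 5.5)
We prove \eqref{L2bd} by the same route as Theorem~\ref{th_subcrit}, and then obtain \eqref{newL1bd} from a counting argument on the vertices outside small components. The first step is to fix a threshold $k^*=k^*(n)$ for the \emph{gap}. We take $k^*$ to be a large multiple of $\delta(t)^{-1}\log(\eps^3 n)\asymp \eps^{-2}\log(\eps^3n)$; this uses $\delta(t)\asymp\eps^2$, which follows from $\delta(\tc)=\delta'(\tc)=0$ and $\delta''(\tc)>0$. The condition $\eps^6n\to\infty$ makes this $o(\sqrt n)$, so Theorem~\ref{th1} (in its detailed form, Lemmas~\ref{lsofar} and~\ref{lmain}) applies to every $k\le k^*$.

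For the lower range, let $k_0(x)$ be the threshold in \eqref{L2bd}, and apply Theorem~\ref{th1} with $k_1=k_0(x)$ and $k_2=k^*$. On $\good$ we have approximately independent Poisson counts of tree components with means $n\rho_k/k$, and non-tree components are negligible. The Poisson approximation then gives
\[
 \Pr\bigl(\text{no component of size in }[k_0,k^*)\bigr)\to \exp\Bigl(-\lim \sum_{k\ge k_0} n\rho_k(t)/k\Bigr).
\]
By Theorem~\ref{RW} this sum is
\[
 \sum_{k\ge k_0}n\gamma k^{-5/2}e^{-\delta k}(1+O(1/k))\sim n\gamma k_0^{-5/2}e^{-\delta k_0}/\delta.
\]
Substituting $\delta k_0=\log(\eps^3n)-\tfrac52\log\log(\eps^3n)+c(t)+x$ and $k_0\sim\delta^{-1}\log(\eps^3n)$ shows that this equals $e^{-x}$, provided $c(t)$ is chosen as in \eqref{ctdef}; here we use $\delta\asymp\eps^2$ and that $\gamma(t)/\eps^{3}\delta^{-3/2}$ converges. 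This is where $c(t)$ comes from. The factor $1+O(1/k)$ and the relative error in $\E\tN_k$ must be $o(1)$ uniformly over the range, and they are, since $k_0\to\infty$. Because the sum decays geometrically beyond $k_0$, only $k$ within $O(1/\delta)$ of $k_0$ matter, which justifies the independent-Poisson approximation.

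The upper range is the main obstacle: we must show that whp there is exactly one component with more than $k^*$ vertices, and no others of size in $(k^*,\infty)$. For components of size in $[k^*,K]$, with $K$ a small power of $n$, we would again use the expected counts, $\sum_{k\ge k^*}n\rho_k/k\to 0$, together with the fact that $\E\tN_k$ is controlled up to $o(\sqrt n)$. The harder part is excluding two large components. For this I would use the Bollob\'as gap idea combined with a sprinkling or two-round exposure argument. Compare $G_m$ with $G_{m'}$ where $m'=m-\eta\eps n$. Any two components of size at least $k^*$ at time $m'$ would whp merge during the remaining $\eta\eps n$ steps. This holds because in the Bohman--Frieze process each step still uses the uniformly random edge $e_{m,2}$ with probability bounded below, given that a positive fraction of vertices are non-isolated. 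Moreover, components of size below $k^*$ at time $m'$ cannot grow past the gap and survive as intermediate-sized components, by the absence of components in $[k^*,K]$ at all intermediate times, which we get from a union bound over times. If direct merging at this scale is too weak, I would try instead to import the supercritical uniqueness results of~\cite{RW17}, which give $L_2=O(\eps^{-2}\log(\eps^3n))$ whp, and use them to cap the range.

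Finally, for \eqref{newL1bd}, the count $L_1=n-\sum_{k\le k^*}N_k-(\text{other})$ follows from the previous step. We write $\sum_{k\le k^*}N_k=n(1-\rho(t))-n\sum_{k>k^*}\rho_k+\text{fluctuation}$. The tail term $n\sum_{k>k^*}\rho_k$ is negligible by Theorem~\ref{RW}. The fluctuation is controlled by variance bounds from Lemma~\ref{lmain}: $\Var\bigl(\sum_{k\le k^*} N_k\bigr)\approx\sum_k nk\rho_k$, together with the $O(k/\sqrt n)$ bias in expectations summed over $k\le k^*$. These give an error of order $\eps^{-1/2}n^{3/4}$. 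Indeed the bias term is $\sum_k n\rho_k\cdot k/\sqrt n\asymp \sqrt n\,\eps^{-1}$ up to logarithmic factors, and balancing against the variance yields the stated $\Op$ bound. Since $\eps^{-1/2}n^{3/4}=o(\eps n)$ exactly when $\eps^6n\to\infty$, and $\rho(t)\sim\xi\eps$ by \eqref{rhosim1}, we obtain $L_1=(1+\op(1))\rho(t)n$.
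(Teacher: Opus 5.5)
The lower-range (Poisson) part of your argument matches the paper's Lemma~\ref{lsmall}. Your treatment of \eqref{newL1bd} is in the spirit of Lemma~\ref{conc}. The genuine gap is in the step you flag as the main obstacle: showing there is only one large component.

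\textbf{The merging claim fails.} You claim that any two components of size at least $k^*$ at time $m'$ whp merge during the remaining $\eta\eps n$ steps. This is false as stated. For fixed components $C_1,C_2$, each step adds one of the $|C_1||C_2|$ joining edges with probability $O(|C_1||C_2|n^{-2})$. Over $\eta\eps n$ steps, a direct join therefore has probability $O(|C_1||C_2|\eta\eps/n)$. This tends to $0$ for components of size $o(\sqrt n)$, and after the gap you know only that they exceed $K$. The fact that $e_{m,2}$ is used with probability bounded below controls the per-edge probability; it does not supply the missing count of edges.

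\textbf{What is missing: the mass argument.} The \emph{union} of the large components at the earlier time must contain $\Omega(\eps n)$ vertices. The paper gets this from Lemma~\ref{conc} applied at $\hat t=(\tc+t)/2$; your argument never establishes it at the earlier time, and never uses it. Then, for a large component $C$ and the union $R$ of the others, one has $|C||R|\ge KL$ with $L=\Omega(\eps n)$. Over a block of $\eps n/(2\log n)$ steps, the probability that no $C$--$R$ edge appears is $\exp\bigl(-\Omega(\eps^2\sqrt n\,\omega^{-2}/\log n)\bigr)$.

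This is small only because $K=\sqrt n/\omega^2$ is essentially as large as Lemma~\ref{lmain} allows, and $\eps^2\sqrt n\ge n^{1/6}$. If `large' meant at least $k^*\asymp\eps^{-2}\log n$, or a small power of $n$ below $n^{1/3}$, the exponent would be $O(1)$ or even $o(1)$.

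Even then, this only shows that each cluster of large components joins \emph{some} other cluster, which does not by itself give a single component. The paper therefore splits $[\hat m,m]$ into $\log n$ blocks and shows that the number of large components halves in each block. It uses the gap Lemma~\ref{lgap} at every intermediate time to ensure that no new large components are created.

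\textbf{Your fallback.} Suppose \cite{RW17} really gives $L_2=O(\eps^{-2}\log(\eps^3 n))$ whp throughout this range. Then importing that bound would close the gap. Indeed, $L_2$ would whp be the largest component of size at most $\sqrt n/\omega^2$, so Lemma~\ref{lsmall} gives \eqref{L2bd}. Also $L_1=N_{>K}$, so Lemma~\ref{conc} gives \eqref{newL1bd}. As written, however, this is a hedged appeal to a result whose form you have not pinned down, and it outsources exactly the step that needs an argument.

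\textbf{A smaller point on \eqref{newL1bd}.} The $n^{3/4}$ does not come from a Poisson-type variance $\sum_k nk\rho_k=O(n/\eps)$; the method cannot prove that. It comes from the $O((k_1+k_2)\omega/\sqrt n)$ relative error in the pair counts $\E\tT_{k_1,k_2}$. This only bounds $\Var\tS_T$ by $O\bigl(n\cdot\omega\sqrt n\sum_k k\rho_k\bigr)=O(\omega n^{3/2}/\eps)$. Non-tree components must be handled separately via $\E\tS_C=O(\eps^{-3})$. The extra factor $\omega^{1/2}$ is then removed by letting $\omega\to\infty$ arbitrarily slowly.
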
%
In interpreting the bounds in \eqref{newL1bd}, it may help to note that $\rho(\tc+\eps)=\Theta(\eps)$ for $\eps>0$;
see \eqref{rhosim1}.
Since we assume $\eps^6n\to\infty$,
the first expression for $L_1$ in \eqref{newL1bd} thus implies the second. The second, weaker bound was
proved by Warnke and the second author~\cite[Theorem 2.8]{RW17}, under the weaker assumption $\eps^3n\to\infty$.

\subsection{Relationship to earlier results}

There has been considerable interest in studying the near-critical behaviour of random graph processes
other than the classical Erd\H os--R\'enyi (ER) process, and in particular that of the Bohman--Frieze process,
as one of the simplest examples that does not have the independence properties of the ER process.
The behaviour inside the `scaling window' of the phase transition was established by Bhamidi, Budhiraja and Wang~\cite{BBWagg,BBWcrt},
but that outside, where $t=m/n=\tc+\eps$ for $\eps\to 0$ with $\eps^3n\to\infty$, is only partially understood.

As an example, Kang, Perkins and Spencer~\cite{KPS} conjectured (their Conjecture 1) 
that for the Bohman--Frieze process  $(G_m^{(n)})$ 
there exists a constant $K$ such that for any fixed $\eps>0$, we have $\Pr\bb{L_1(G_{(\tc-\eps)n}^{(n)})\le K \eps^{-2}\log n}\to 1$ as $n\to \infty$.
Bhamidi, Budhiraja and Wang~\cite{BBWsub} made some progress towards this, showing that for any bounded-size rule
and any $\beta\in (0,1/4)$, there exists $K\in (0,\infty)$ such that $\Pr\bb{L_1(G_{tn}^{(n)})\le K(\tc-t)^{-2}\log^4 n,\ \forall t\le\tc-n^{-\beta}}\to 1$. This bound was in fact a key ingredient in their analysis of
the behaviour inside the window in~\cite{BBWcrt}. Sen~\cite{SS} proved Conjecture 1 of~\cite{KPS},
and Warnke and the second author~\cite{RW17} proved a much tighter form
of the conjecture, obtaining a bound of the form
\begin{equation*} 
 L_1(G_m^{(n)}) =  \delta(\tc-\eps)^{-1}\left(\log(\eps^3n)-\frac{5}{2}\log\log(\eps^3n)+\Op(1)\right)
\end{equation*}
whenever $\eps=\tc-m/n$ satisfies $\eps \in (0,\eps_0)$ and $\eps^3 n \to \infty$ as $n \to \infty$.
Specifically for the Bohman--Frieze rule, and subject to an additional restriction on $\eps(n)$,
Theorem~\ref{th_subcrit} sharpens this result yet further, finding the limiting distribution of the $\Op(1)$ term.
This is especially significant since it is only here that, via the function $c(t)$, a (small!) difference
from the Erd\H os--R\'enyi behaviour is visible.
It might in principle be possible to push the first and second moment arguments of~\cite{RW17} to higher moments
and so obtain a result like Theorem~\ref{th_subcrit} by the methods of that paper, but the arguments for two moments there are already
extremely complicated, so this would likely be difficult.

Considering the size $L_2$ of the second largest component in the supercritical case,
the previously known bounds are \emph{much} weaker: we are not aware of anything at all comparable in accuracy
to \eqref{L2bd}. For example, in~\cite[Theorem 2.8]{RW17} only a (not very strong) upper bound is given.
A key advantage of our tree-counting methods is that they work equally well just
above and just below the critical point, so we obtain an extremely sharp bound on $L_2$ in the supercritical case.
As far as we are aware, no similarly sharp bounds have been shown for any type of random graph process
with dependence;
the problem is that any `duality principle' is much harder to understand precisely than in the Erd\H os--R\'enyi case.

Except for the restrictions \eqref{epscond} and $\eps^6n\to\infty$, Theorem~\ref{th_subcrit} and the bound \eqref{L2bd} in Theorem~\ref{th_supcrit} are direct analogues of the refinements
given by {\L}uczak~\cite{Luczak90}
of the 1984 results of Bollob\'as~\cite{BB84} for the Erd\H os--R\'enyi model. ({\L}uczak has $2\eps^{-2}$
in place of $\delta(t)^{-1}$ but, as noted in~\cite{rg_bp}, this is incorrect. For
the Erd\H os--R\'enyi model $\delta(t)= \eps^2/2 -\Theta(\eps^3)$ for $t=1+\eps$; see~\cite{BB84}.)
We need to impose condition \eqref{epscond} because, in their present form,
our tree-counting arguments only apply to trees of size up to $o(\sqrt{n})$.
To replace this condition with `what it should be' (namely, $\eps^3n\to\infty$) would require handling
all trees of size $o(n^{2/3})$. This may be possible in principle, but would certainly introduce many
additional complications; see Remark~\ref{rem_precise}.
It may perhaps be possible to avoid some of these complications with
branching process arguments, along the lines given by Bollob\'as and the second author in~\cite{rg_bp} or perhaps~\cite{BRsimple}. 

\begin{remark}
In this paper we concentrate on the Bohman--Frieze process. As noted in Section~\ref{ss_refined} (in a footnote),
our arguments extend with only notational complications to a class of bounded-size Achlioptas processes
satisfying a certain monotonicity property. Furthermore, that monotonicity is only used in one place,
to deal with a certain estimate, and is presumably not needed for our results. We have not investigated
this further for fear of burying the key ideas -- in particular that we can take the evolution
of the number of isolated vertices as a `background' and effectively recover enough
independence to apply tree-counting methods -- in superficial complications.
\end{remark}

\begin{remark}
The history of this paper and its relationship to the paper~\cite{RW17} by Warnke and the second author is rather unusual.
The ideas here are very different from those in~\cite{RW17}, and in fact the first draft of this
paper was written earlier, after Kang, Perkins and Spencer~\cite{KPS} appeared,
using~\cite[Theorem~3]{KPS} rather than
\cite[Theorem 2.9]{RW17} as an `input' (here Theorem~\ref{RW}). Unfortunately, as noted in~\cite{KPSconjecture},
the proof of Theorem~3 in~\cite{KPS} is seriously flawed, and does not appear to be fixable. Still,
it may well be that Theorem~\ref{RW}, which is purely a statement about the solution to certain differential
equations (DEs), has a pure DE proof. One might hope for such a proof that is considerably simpler
than the argument in~\cite{RW17}, which does use DE methods, but also uses comparisons to branching processes
and results from Janson, Riordan and Warnke~\cite{JRW18}, and is overall rather involved. If such a proof
is found, then it could be dropped in place of Theorem~\ref{RW} here, giving a simpler overall route to some
(but by no means all) of the results of~\cite{RW17}. In any case, although here we use results
from~\cite{RW17} in various places, these are mostly for convenience: the only essential dependence
on~\cite{RW17} is via Theorem~\ref{RW}, which is of course critical for our results.
\end{remark}

\subsection{The basic method}
A key property of the process $(G_m)$ is that the random edges $e_m$ are not independent,
but, given $G_m$, the probability that $e_{m+1}$ is some specific edge $e$ depends on $G_m$
in a rather simple way. To be specific, let $I_m$ denote the (random, of course) number of isolated
vertices in $G_m$. Then whenever both ends of $e$ are isolated in $G_m$ we have
\begin{eqnarray}
 \Pr(e_{m+1}=e \mid G_m) &=& \binom{n}{2}^{-1} + \left(1-\binom{I_m}{2}\binom{n}{2}^{-1}\right)\binom{n}{2}^{-1} \nonumber \\
 &=& 2n^{-2}\bb{2-(I_m/n)^2} +O(n^{-3}), \label{pisol}
\end{eqnarray}
since $e_{m+1}=e$ if and only if $e_{m+1,1}=e$ or $e_{m+1,1}$ does not join two isolated vertices and $e_{m+1,2}=e$.
Otherwise, we have
\begin{eqnarray}
 \Pr(e_{m+1}=e \mid G_m) &=& \left(1-\binom{I_m}{2}\binom{n}{2}^{-1}\right)\binom{n}{2}^{-1} \nonumber\\
 &=& 2n^{-2}\bb{1-(I_m/n)^2} +O(n^{-3}), \label{pnisol}
\end{eqnarray}
since for this to happen $e_{m+1,1}$ must not join two isolated vertices and we must have $e_{m+1,2}=e$.%

Note that the asymptotic expressions \eqref{pisol} and \eqref{pnisol} remain valid in the variants
of the method described at the start of the introduction,
as long as $m=O(n)$ and we consider $e$ not already present in $G_m$.%

The key observation is a very simple one: it is known (from the differential equation method; see~\cite{SW07}) that $I_m/n$ is concentrated around a deterministic trajectory. Hence, we can consider the conditional probability that a certain edge $e$ is added at step $m$ as essentially one of two deterministic
functions of $m$; the first function applies if both ends of $e$ are isolated, and the second otherwise.  
We can then use this to evaluate the probability that a certain small connected set of edges forms a component of $G_m$. This allows us to adapt tree-counting first- and second-moment arguments used by Erd\H os and R\'enyi~\cite{ER60}
and Bollob\'as~\cite{BB84} to study the Erd\H{o}s--R\'enyi model, although there are of course complications.
To implement this strategy we shall need a strong form of the concentration result of Spencer and Wormald~\cite{SW07} mentioned above. Although it seems likely that this (or something very similar) has already been proved by others, we have not found it in the literature, and include a proof for completeness. Here and for the rest of the paper we fix a function $\omega(n)$ with $\omega(n)\to\infty$ (slowly). When we come to consider components up to some size $k(n)$ that is $o(\sqrt{n})$ we may of course assume that $\omega(n)=o(\sqrt{n}/k(n))$, so
\begin{equation}\label{kosn}
 k=o(\sqrt{n}/\omega).
\end{equation}
\begin{lemma}\label{iconc}
Let $\rho_1(t)$ denote the solution to the differential equation
\begin{equation}\label{rho1}
 \frac{\dd \rho_1}{\dd t} = -2\rho_1^2 - 2(1-\rho_1^2)\rho_1,
\end{equation}
with initial condition $\rho_1(0)=1$. Let $C$ be a constant. Then with probability $1-o(1)$ we have
\begin{equation}\label{gooddef}
 \forall m\le Cn: |I_m/n-\rho_1(m/n)|\le \omega n^{-1/2}.
\end{equation}
\end{lemma}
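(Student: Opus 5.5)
We track the isolated-vertex count $I_m$ as a one-dimensional Markov chain and run a martingale (Freedman) argument combined with a discrete Gr\"onwall bound. The key observation is that $I_m$ alone determines the law of $I_{m+1}-I_m$. Write $x=I_m/n$. The first potential edge $e_{m+1,1}$ joins two isolated vertices with probability $p_m=\binom{I_m}{2}\binom n2^{-1}=x^2+O(1/n)$, and then $I$ drops by exactly $2$. Otherwise the edge $e_{m+1,2}$ is uniform. In that case $I$ drops by $2$ with probability $\approx x^2$ and by $1$ with probability $\approx 2x(1-x)$, and otherwise does not change; loops or multiple edges only affect $O(1/n)$ terms. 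Hence
\[
 \E(I_{m+1}-I_m\mid G_m)=-2x^2-(1-x^2)(2x^2+2x(1-x))+O(1/n)=-2x^2-2(1-x^2)x+O(1/n)=:F(x)+O(1/n),
\]
which matches the right-hand side of \eqref{rho1}. The increments are also bounded: $|I_{m+1}-I_m|\le 2$.

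\textbf{Step 1 (martingale part).} Write $I_m=n+\sum_{j<m}F(I_j/n)+\sum_{j<m}O(1/n)+M_m$, where $M_m$ is a martingale with increments bounded by $4$. Doob's $L^2$ maximal inequality gives $\E\max_{m\le Cn}M_m^2\le 4\cdot 16Cn$. Since $\omega\to\infty$, Chebyshev then shows that with probability $1-o(1)$ we have $\max_{m\le Cn}|M_m|\le \tfrac12 \eps_n\omega\sqrt n$ for any fixed small constant fraction $\eps_n$. More precisely, we get $|M_m|\le \omega' \sqrt n$ with $\omega'=\omega/K$ for a constant $K$ to be chosen, and this is still $\to\infty$. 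No stronger concentration is needed, because $\omega$ is allowed to tend to infinity arbitrarily slowly.

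\textbf{Step 2 (deterministic comparison).} Set $y_m=n\rho_1(m/n)$. Because $\rho_1$ is smooth on $[0,C]$ with bounded second derivative, Taylor's theorem gives $y_{m+1}-y_m=F(y_m/n)+O(1/n)$. Let $D_m=I_m-y_m$. Then $D_0=0$ and
\[
 D_m=\sum_{j<m}\bigl(F(I_j/n)-F(y_j/n)\bigr)+M_m+O(m/n).
\]
The function $F$ is Lipschitz on $[0,1]$ with some constant $L$, and both $I_j/n$ and $\rho_1$ lie in $[0,1]$; the latter holds because $[0,1]$ is invariant for \eqref{rho1}. Therefore $|D_m|\le (L/n)\sum_{j<m}|D_j|+|M_m|+O(C)$.

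\textbf{Step 3 (Gr\"onwall).} On the event of Step 1 we have $|D_m|\le (L/n)\sum_{j<m}|D_j|+\omega'\sqrt n+O(1)$ for all $m\le Cn$. The discrete Gr\"onwall inequality then gives $|D_m|\le (\omega'\sqrt n+O(1))(1+L/n)^{m}\le e^{LC}(\omega'\sqrt n+O(1))$. Choosing $K=2e^{LC}$ makes this at most $\omega\sqrt n$, which is \eqref{gooddef}.

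The only delicate points are the bookkeeping of the $O(1/n)$ corrections in the drift, which come from $\binom{I}{2}$ versus $I^2/2$ and from loops or repeated endpoints, and checking that the Lipschitz and Taylor constants are uniform on $[0,C]$; neither is a serious obstacle. The step I would verify most carefully is the drift computation, namely that the distribution of $I_{m+1}-I_m$ really depends only on $I_m$. This holds because the Bohman--Frieze decision uses only the isolation status of the endpoints of $e_{m+1,1}$.
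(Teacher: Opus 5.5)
Your proof is correct, but it follows a different route from the paper's. The paper follows Wormald. It builds an error envelope $e(t)\omega n^{1/2}$, with $e(t)=e^{14t}$, directly into the processes $I_m-\rho_1(m/n)n\pm e(m/n)\omega n^{1/2}$. It shows these are a sub- and a supermartingale respectively, but only up to the stopping time at which $I_m$ first leaves the band, because its drift comparison is valid only inside the band. It then applies Azuma--Hoeffding to the stopped processes.

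You separate the noise from the deterministic error propagation. The Doob decomposition isolates a single bounded-increment martingale $M_m$, whose maximum you control once. The amplification of errors is then handled pathwise by the discrete Gr\"onwall inequality. This uses only that $F$ is Lipschitz on $[0,1]$ and that both $I_m/n$ and $\rho_1$ stay in $[0,1]$. As a result you need no stopping time and no tuning of the growth rate of $e(t)$; your factor $e^{LC}$ plays the role of the paper's $e^{14C}$. The argument is also visibly robust to the particular form of the drift.

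The paper's approach gives an exponential failure bound $2\exp(-\omega^2/(50C))$. Yours is $O(C/\omega'^2)$, from Doob's $L^2$ inequality and Chebyshev. That is irrelevant for the $1-o(1)$ statement. You could also recover sub-Gaussian tails by applying the maximal Azuma--Hoeffding inequality to $M_m$ in Step 1, leaving Steps 2 and 3 unchanged.
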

\begin{proof}
We shall apply the Azuma--Hoeffding inequality, adapting the ideas of the proof of Theorem 5.1 of~\cite{W99}. The crucial idea is to
estimate $I_m/n-\rho_1(m/n)$ within an error $e(m/n)\omega n^{-1/2}$, by
choosing the `error' function $e(m/n)$ so that (at least until the point
that some unlikely `bad' event holds),
\begin{align}
I_m^+&= I_m-\rho_1(m/n)n + e(m/n)\omega n^{1/2} \rlap{\hbox{\quad and}} \label{Impdef}\\ 
I_m^-&= I_m-\rho_1(m/n)n - e(m/n)\omega n^{1/2} \nonumber
\end{align}
are a submartingale and a supermartingale, respectively, with respect to the natural filtration
$\mathcal F_0\subseteq\mathcal F_1\subseteq \cdots$ associated to the process $(G_m)$. To be totally
explicit, we set
\begin{equation}\label{edef}
 e(t)=e^{14t}.
\end{equation}
We first study the expected change in the number of isolated vertices
in step $m+1$, i.e., $\E[I_{m+1}-I_m\mid \cF_m]$. Of course, the number of isolated vertices can only decrease. If $e_{m+1,1}$ joins two isolated vertices,
an event of probability $\binom{I_m}{2}\binom{n}{2}^{-1}$, then $I_{m+1}-I_m=-2$.
Otherwise, $I_{m+1}-I_m$ is $-2$, $-1$ or $0$ according to whether $e_{m+1,2}$ joins two isolated vertices,
one isolated vertex and one other vertex, or two non-isolated vertices. Thus
\begin{align}
\E[I_{m+1}-I_m\mid\cF_m]
&= -2 \binom{I_m}{2}\binom{n}{2}^{-1}
-2 \left(1-\binom{I_m}{2}\binom{n}{2}^{-1}\right)\binom{I_m}{2}\binom{n}{2}^{-1}\nonumber\\
&\hskip 1.5cm -\left(1-\binom{I_m}{2}\binom{n}{2}^{-1}\right)I_m(n-I_m)\binom{n}{2}^{-1}\nonumber\\
&= -2 \left(\frac{I_m}{n}\right)^2 -2\left(1-\left(\frac{I_m}{n}\right)^2\right)\frac{I_m}{n}+O(n^{-1}).\label{trendhyp1}
\end{align}
(Of course, this formula can also be derived from \eqref{pisol} and \eqref{pnisol}.)
From \eqref{rho1} and \eqref{edef}, $\rho_1(t)$ and $e(t)$ are infinitely differentiable,
so $\rho_1''$ and $e''$ are continuous and hence bounded on $[0,C]$.
It follows that, for all $0\le m\le Cn$, writing $t$ for $m/n$ we have 
\begin{align}
\bb{ \rho_1((m+1)/n)-\rho_1(m/n)} n&= \rho_1'(t)+ O(n^{-1}) \rlap{\hbox{\quad and}} \label{trendhyp2}\\
\bb{ e((m+1)/n)-e(m/n)} \omega n^{1/2}&= e'(t)\omega n^{-1/2}+ O(n^{-1}).\label{trendhyp3}
\end{align}
Let $A_m$ denote the event that at step $m$ the desired estimate
\begin{equation}\label{goodevent}
\rho_1(m/n)n - e(m/n)\omega n^{1/2}\le I_m\le \rho_1(m/n)n + e(m/n)\omega n^{1/2}
\end{equation} 
holds, and define the stopping time $T$ to be the first step $m\le Cn$ for
which $A_m$ fails, or $Cn$ if there is no such a step. Set $\hI_m^+=I_{m\wedge T}^+$ and $\hI_m^-=I_{m\wedge T}^-$. We first show that $(\hI_m^+)_{m=0}^{Cn}$ is a submartingale.
If $A_m$ does not hold, then $\hI_{m+1}^+=\hI_m^+$ (by definition of the stopping time),
and so $\E[\hI_{m+1}^+-\hI_m^+\mid\mathcal F_{m}]=0$.
Suppose that $A_m$ holds.
Then, putting \eqref{trendhyp1}--\eqref{trendhyp3} together, and writing $t$ for $m/n$ as before, we obtain
\begin{align}
&\E[\hI_{m+1}^+-\hI_m^+\mid\mathcal F_{m}]\nonumber\\
&= -2 \left(\frac{I_m}{n}\right)^2 -2\left(1-\left(\frac{I_m}{n}\right)^2\right)\frac{I_m}{n}
   - \rho_1'(t)+ e'(t)\omega n^{-1/2}+ O(n^{-1}).\label{trendhyp4}
\end{align}
From \eqref{goodevent}, recalling that $e(t)=\Theta(1)$ and $0\le \rho_1(t)\le 1$, we have
\begin{align}
&-2 \left(\frac{I_m}{n}\right)^2 -2\left(1-\left(\frac{I_m}{n}\right)^2\right)\frac{I_m}{n}\nonumber\\
&\ge -2\left(\rho_1(t) + e(t)\omega n^{-1/2}\right)^2 -2\left(\rho_1(t) + e(t)\omega n^{-1/2}\right)+2\left(\rho_1(t) - e(t)\omega n^{-1/2}\right)^3\nonumber\\
&=-2\rho_1(t)^2-2\rho_1(t)+2\rho_1(t)^3 -\left(4\rho_1(t)+2+6\rho_1(t)^2\right)e(t)\omega n^{-1/2}
     +O(\omega^2n^{-1}) \nonumber\\
&\ge \rho_1'(t) - 13 e(t)\omega n^{-1/2},\label{trendhyp5}
\end{align}
if $\omega\le n^{1/4}$, say, and $n$ is large enough. Here we used the fact that $\rho_1(t)$ solves \eqref{rho1}.
Substituting \eqref{trendhyp5} into \eqref{trendhyp4}, we have
\begin{align*}
\E[\hI_{m+1}^+-\hI_m^+\mid\mathcal F_{m}]
&\ge  \left(\rho_1'(t) - 13 e(t)\omega n^{-1/2}\right) - \rho_1'(t)+ e'(t)\omega n^{-1/2}+ O(n^{-1})\nonumber\\
&= - 13 e(t)\omega n^{-1/2} + 14e(t)\omega n^{-1/2}+ O(n^{-1})\ge 0.\label{trendhyp5}
\end{align*}
Thus  $(\hI_{m}^+)_{m=0}^{Cn}$ is a submartingale.
An almost identical argument, which we omit, shows that $(\hI_m^-)_{m=0}^{Cn}$ is a supermartingale.

In order to apply the Azuma--Hoeffding inequality we need to bound $\hI_{m+1}^+ - \hI_m^+$.
We always have $I_{m+1}-I_m\in \{0,-1,-2\}$.
Since $0\le -\rho_1'(t)\le 2$ for all $t$, and $e'(t)=O(1)$ for $t=O(1)$, if $n$ is large enough
then for $0\le t\le C$ we have
$|\rho_1'(t) \pm e'(t)\omega n^{-1/2}+ O(n^{-1})|\le 3$. 
Recalling the definition \eqref{Impdef} of $I_m^+$ and \eqref{trendhyp2}, \eqref{trendhyp3}  
it follows that
$|\hI_{m+1}^+ - \hI_m^+|\le |I_{m+1}^+ - I_m^+|\le 5$; 
a similar bound holds for $\hI_m^-$.
Since $\hI_0^+=\omega n^{1/2}$ and $\hI_0^-=-\omega n^{1/2}$, by (the supermartingale variant of)
the Azuma--Hoeffding inequality we have
\begin{align*}
&\Pr\bb{\exists 0\le m\le Cn\hbox{ s.t.\ }A_m\hbox{ fails}}\\
&\hspace{5ex}= \Pr\bb{\exists 0\le m\le Cn\hbox{ s.t.\ }\hI_m^+ <0 \  \vee \  \hI_m^- >0}\\
&\hspace{5ex}= \Pr\bb{\exists 0\le m\le Cn\hbox{ s.t.\ }\hI_0^+ - \hI_m^+ > \omega n^{1/2} \ \vee \ \hI_m^- - \hI_0^- > \omega n^{1/2}}\\
&\hspace{5ex}\le 2 \exp\left(-\frac{(\omega n^{1/2})^2}{50 Cn}\right)\to 0.
\end{align*}
Therefore, with probability $1-o(1)$ the event $A_m$ (the estimate \eqref{goodevent}) holds for all $0\le m \le Cn$. 
This implies \eqref{gooddef} with $e^{14C}\omega$ in place of $\omega$.
\end{proof}%

\begin{remark}\label{rem_precise}
It is easy to see that we cannot prove tighter bounds on the fraction $I_m/n$ of vertices
in isolated components than the $+\Op(n^{-1/2})$ bound above. This bound feeds in to
the error terms in many estimates throughout the paper, and is ultimately the reason
why we can only handle trees up to size around $n^{1/2}$, rather than $n^{2/3}$ as we
would ideally like. To push our methods further, it might be necessary to write $I_m/n$
as $\rho_1(m/n)+F(m/n)n^{-1/2}$ for some random function $F(t)$ of order $1$,
and to calculate taking this extra term into account. It is even possible that, writing
the formulae in the right way, the $F(\cdot)$ terms would drop out to first order,
though this is very far from clear.
\end{remark}

\subsection{A continuous-time idealised model}\label{ss_ct}
In the light of Lemma~\ref{iconc} and the formulae \eqref{pisol} and \eqref{pnisol}, if we consider
a not-too-large set $V$ of vertices, then we may expect the edges incident with $V$ to
evolve roughly according to the following continuous-time model.%

Firstly, in a small interval $[t,t+\dt]$ (corresponding to $n\dt$ steps
of the random graph process) a given edge $uv$ within $V$ appears with probability
\begin{equation}\label{contp1}
 2n^{-1}(2-\rho_1(t)^2) \dt \hbox{ \ or \ } 2n^{-1}(1-\rho_1(t)^2) \dt
\end{equation}
according to whether or not $u$ and $v$ are currently both isolated. Secondly, a given vertex $v\in V$ becomes
connected to some vertex outside $V$ with probability
\begin{equation}\label{contp2}
  2(1-\rho_1(t)^2 +\rho_1(t)) \dt \hbox{ \ or \ } 2(1-\rho_1(t)^2) \dt,
\end{equation}
according to whether $v$ is currently isolated or not. Here the second formula
comes from multiplying \eqref{pnisol} by the number $n\dt$ of steps and the number $\sim n$ of
vertices outside $V$, and the first from adding to this a term corresponding
to the difference between \eqref{pisol} and \eqref{pnisol} for each of the $\sim \rho_1(t)n$ isolated
vertices outside $V$.%

In what follows, we show that the discrete process is indeed well approximated by this continuous model, as
long as $|V|=o(\sqrt{n})$.%

\section{Ignoring bad events}
From now on we fix a constant $C$, and consider only $m\le Cn$. Let
$\good_m$ denote the event $|I_m/n-\rho_1(m/n)|\le \omega n^{-1/2}$, and $\good$ the event
$\bigcap_{m\le Cn} \good_m$, so $\good$ holds whp
by Lemma~\ref{iconc}. The next lemma
captures in a precise way the idea that we can `ignore what happens when $\good$ fails'.
We must be a little careful, as we shall calculate the probabilities of very unlikely
events $A$, and then sum over many events. So a bound of the form $\Pr(A)= \Pr(\tA)\pm \Pr(\good^\cc)$
is not enough, and we need the slightly fussy result below. From now on we
adopt the convention that if $(A_i)$ is a sequence of events, then $A_{\le i}$ denotes
the event $\bigcap_{j=1}^i A_j$.%

\begin{lemma}\label{lcond}
Let $(\Omega,\cF,\Pr)$ be a probability space without atoms. Let $(\cF_m)_{m\ge 0}$ be a filtration
of $\cF$, and for each $i$ let $\good_i$ and $A_i$ be $\cF_i$-measurable events.
Suppose that for all $i\le m$ we have
\begin{equation}\label{cpb}
 p_i \le \Pr( A_i \mid \cF_{i-1} ) \le q_i
\end{equation}
whenever $A_{\le i-1}\cap \good_{\le i-1}$ holds. Then there exist events $B_1,\ldots,B_m$ such that
\begin{equation}\label{ABH}
 A_{\le i} \symdiff B_{\le i}  \subseteq (\good_{\le i-1})^\cc
\end{equation}
for $i\le m$ and
\begin{equation}\label{est}
 \prod_{j=1}^i p_j \le \Pr(B_{\le i}) \le \prod_{j=1}^i q_j.
\end{equation}
\end{lemma}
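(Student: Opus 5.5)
Define the $B_i$ inductively so that on the good region they coincide with the $A_i$, while off it we supply an artificial event of exactly the right conditional probability. Atomlessness is what lets us create such events.

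Let $G_{i-1}=A_{\le i-1}\cap\good_{\le i-1}$, which is $\cF_{i-1}$-measurable. Choose a number $r_i\in[p_i,q_i]$, say $r_i=p_i$ (any value works). On $G_{i-1}$ set $B_i=A_i$. Off $G_{i-1}$ there are two cases.
\begin{itemize}
\item On $B_{\le i-1}\cap (A_{\le i-1}\cap\good_{\le i-1})^\cc$, let $B_i$ be an event $D_i$ with $\Pr(D_i\mid \cF_{i-1}\vee\sigma(D_1,\dots,D_{i-1}))=r_i$.
\item Elsewhere, $B_i=A_i$; this choice is irrelevant, since it only matters inside $B_{\le i-1}$.
\end{itemize}

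To get the $D_i$ with exactly prescribed conditional probability, we need auxiliary randomness independent of everything. Since $\Omega$ is atomless we cannot simply enlarge the space. Instead, take a uniform random variable $U$ on $\Omega$, which exists because the space is atomless. Using conditional quantile constructions, we get conditional probabilities given $\cF_{i-1}$. To be safe, I would instead enlarge the filtration: let $\cF'_i=\cF_i\vee\sigma(D_1,\ldots,D_i)$. I would then note that hypothesis \eqref{cpb} continues to hold relative to $\cF'_{i-1}$, since $\good$ and $A$ are unaffected. I expect this measure-theoretic point to be the main obstacle. If it causes trouble, the fallback is to regard atomlessness as permitting an independent sequence of uniforms $U_1,U_2,\dots$ independent of $\cF$ and to take $D_i=\{U_i\le r_i\}$. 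The symmetric-difference claim \eqref{ABH} is still meaningful in this setup.

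\textbf{Verification of \eqref{ABH}.} We induct on $i$, assuming $A_{\le i-1}\symdiff B_{\le i-1}\subseteq(\good_{\le i-2})^\cc\subseteq(\good_{\le i-1})^\cc$. Suppose $\good_{\le i-1}$ holds. Then $A_{\le i-1}=B_{\le i-1}$ there. If this common event holds, we are on $G_{i-1}$, where $B_i=A_i$. If it fails, both $A_{\le i}$ and $B_{\le i}$ fail. So the two events agree on $\good_{\le i-1}$.

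\textbf{Verification of \eqref{est}.} On $B_{\le i-1}$, the conditional probability of $B_i$ given $\cF'_{i-1}$ lies in $[p_i,q_i]$. On $G_{i-1}$ this follows from \eqref{cpb}; on the rest of $B_{\le i-1}$ it equals $r_i$. Hence
\[
p_i\Pr(B_{\le i-1})\le \Pr(B_{\le i})=\E\bigl[\mathbf 1_{B_{\le i-1}}\Pr(B_i\mid\cF'_{i-1})\bigr]\le q_i\Pr(B_{\le i-1}),
\]
and induction on $i$ gives \eqref{est}.
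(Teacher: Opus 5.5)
Your skeleton matches the paper's: keep $A_i$ where the history and the good event hold, substitute an artificial event of probability $p_i$ elsewhere, and induct. Your verification of \eqref{ABH} is fine.

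The gap is the existence of the events $D_i$. This is the only place atomlessness enters, and so the only nontrivial point of the lemma. You require $\Pr(D_i\mid\cF_{i-1}\vee\sigma(D_1,\dots,D_{i-1}))=r_i$, but for an arbitrary filtration such events need not exist. For example, if $\cF_{i-1}=\cF$, every conditional probability given $\cF_{i-1}$ is a.s.\ an indicator, so no $D_i$ has conditional probability $r_i\in(0,1)$.

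Your fallback does not repair this. Uniforms \emph{independent of $\cF$} cannot exist: an $\cF$-measurable variable independent of $\cF$ is independent of itself, hence a.s.\ constant. Moreover, atomlessness of $\Omega$ alone does not give uniforms independent of a prescribed sub-$\sigma$-algebra such as $\cF_m$. Likewise, your assertion that \eqref{cpb} persists relative to $\cF'_{i-1}$ \emph{since $\good$ and $A$ are unaffected} is unjustified. Conditioning on extra events built by quantile constructions from a uniform $U$ on $\Omega$ changes $\Pr(A_i\mid\cdot)$ whenever $U$ is correlated with $A_i$.

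The paper closes this gap by first replacing $\cF_i$ with the finite $\sigma$-algebra generated by $A_1,\dots,A_i,\good_1,\dots,\good_i$. Hypothesis \eqref{cpb} survives by the tower property, because $A_{\le i-1}\cap\good_{\le i-1}$ lies in the smaller $\sigma$-algebra. Now $\cF_m$ is a finite partition, and each cell of positive measure is atomless. One can therefore build uniforms $U_1,\dots,U_m$ independent of each other and of $\cF_m$, and conditioning on them leaves $\Pr(A_i\mid\cF_{i-1})$ unchanged.

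Alternatively, within your own framework you can avoid conditional probabilities for $D_i$ entirely. By \eqref{ABH} at stage $i-1$ you have $G_{i-1}\subseteq B_{\le i-1}$. Setting $E_i=B_{\le i-1}\setminus G_{i-1}$ and taking $D_i\subseteq E_i$, you get $B_{\le i}=(G_{i-1}\cap A_i)\cup D_i$. Since $G_{i-1}\in\cF_{i-1}$, integrating \eqref{cpb} gives $\Pr(G_{i-1}\cap A_i)\in[p_i,q_i]\Pr(G_{i-1})$ directly. So it suffices to choose $D_i\subseteq E_i$ with $\Pr(D_i)=p_i\Pr(E_i)$, which exists by the intermediate-value property of atomless measures.
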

In other words, in evaluating $\Pr(A_{\le m})$ we can `pretend' that the conditional probability bound \eqref{cpb}
holds whenever $A_{\le i-1}$ holds (in which case $\Pr(A_{\le m})$ would satisfy the inequalities
in \eqref{est}), except that we must allow ourselves to `cook' (i.e., modify) the event $A_{\le m}$ when $\good_{\le m}$ fails to hold.%
\begin{proof}
The idea is simply to `cook' the event whose probability we are estimating when (if ever) $\good_i$ first fails to hold.
It should be clear that this can be made to work, but let us spell out the details.%

First, replacing each $\cF_i$ by the $\sigma$-algebra generated by $A_1,\ldots,A_i$ and $\good_1,\ldots,\good_i$, we may
assume that each $\cF_i$ is finite. Since our probability space is atomless, we may construct
random variables $U_1,\ldots,U_m$ that are uniformly distributed
on $[0,1]$ and are independent of each other and of $\cF_m$.
Let $\cF_i^*$ denote the $\sigma$-algebra generated by $\cF_i$ and $U_1,\ldots,U_i$.
Set
\[
 B_i = \bb{\good_{\le i-1} \cap A_i} \cup \bb{ (\good_{\le i-1})^\cc \cap \{U_i\le p_i\}}.
\]
Then $B_i$ is $\cF_i^*$-measurable. Also, since $\good_{\le i-1}$ is $\cF_{i-1}^*$-measurable,
we have
 (i) $\Pr(B_i\mid \cF_{i-1}^*) = \Pr(A_i\mid \cF_{i-1}^*)$ on the event $\good_{\le i-1}$ and
(ii) $\Pr(B_i\mid \cF_{i-1}^*)=p_i$ on  $(\good_{\le i-1})^\cc$.%

By definition $B_i\symdiff A_i\subseteq (\good_{\le i-1})^\cc$, from which \eqref{ABH} follows. From (i)
and \eqref{cpb} it follows that when $\good_{\le i-1}\cap B_{\le i-1}=\good_{\le i-1}\cap A_{\le i-1}$ holds
we have $p_i\le \Pr(B_i\mid \cF_{i-1}^*)\le q_i$. Hence, by (ii), this inequality
holds whenever $B_{\le i-1}$ holds, and \eqref{est} follows by induction.
\end{proof}%

Lemma~\ref{lcond} effectively allows us to pretend that the event $\good$ defined in \eqref{gooddef}
always holds when applying the first and second moment methods. More precisely,
suppose that for each $n$ we have some list $A_1,\dots,A_{N(n)}$ of events, for example
the events that each possible tree with $k$ vertices is present as a component
in the $n$-vertex, $m$-edge graph $G_m=G_m^{(n)}$, and let $X_n$ be the number of these events
that hold. Let $\tA_1,\ldots,\tA_{N(n)}$ be the corresponding events
whose existence is guaranteed by the lemma, whose probabilities we can estimate using \eqref{est},
and let $\tX_n$ be the number of $\tA_i$ that hold.
If $\tmu_n=\E[\tX_n]\to 0$
then $\tX_n=0$ whp, and hence (since $\good$ holds whp, and $X_n=\tX_n$ when $\good$ holds)
$X_n=0$ whp. This works even if we cannot estimate $\mu_n=\E[X_n]$, or indeed if $\mu_n$ does not tend
to $0$. Similarly, if we can show that the expected number
of pairs of events $\tA_i$ that hold is asymptotically $\tmu_n^2$, then by the second
moment method we see that $\tX_n/\tmu_n\pto 1$, so $X_n/\tmu_n\pto 1$.%

\begin{remark}\label{RtE}
We shall apply Lemma~\ref{lcond} to certain basic events $A$ of the form
that certain edges appear in the random graph at certain times
and other edges have not appeared by a certain time. In each case we write
$\tA$ for an event satisfying the conditions \eqref{ABH} and \eqref{est}. (Thus
$\tA$ is not uniquely defined, but this will not matter.) If $X$ is a `counting random
variable' given by the sum of the indicator functions $1_{A_j}$ of some basic events $A_j$,
then we write $\tX$ for a corresponding `cooked' variable $\sum_j 1_{\tA_j}$.
If $E$ is a disjoint union of basic events $A_j$, then we will assume that the
cooked events $\tA_j$ are also disjoint, and write $\tE$ for their union.
It is not hard to check that the $\tA_j$ may be defined so that they are disjoint;
it is also not necessary: in what follows we can always replace $\Pr(\tE)$ by $\E \tX$
for suitable $X$. The only reason to consider $\tE$ is to make the notation more
concise and intuitive.
\end{remark}%

\section{Subgraph probabilities}%

\subsection{The basic estimate}\label{ss_basic}%
We now apply the ideas of the previous section to estimate the probability that a particular
`small' subgraph $H$ appears as a component of $G_m=G_m^{(n)}$, where $m\le Cn$. Although in the end
it is components that we wish to study, in what follows we allow $H$ to be disconnected
since this will be needed when considering moments of the number of components
of a certain size.%

Fix $n$ and $m$, and let $H$ be a graph with vertex set $V\subset [n]$,
with $k=|V|$ vertices and $\ell$ edges. When it comes to asymptotic
estimates, we shall allow $m$, $k$ and $\ell$ to grow with $n$,
but we always assume that
\[
 m=O(n) \hbox{ \ and \ } k=o(\sqrt n/\omega),
\]
where, as before, $\omega=\omega(n)\to\infty$ slowly.
In this subsection we further assume that $\ell=O(k)$.%

Let $E_H$ be the event that each component of $H$ is a component
of $G_m$, i.e., that $G_m$ contains all edges of $H$ but
no other edges incident with $V$.
Clearly, $E_H$ is the disjoint union of the events $E_{H,\prec,\bm}$,
where $\prec$ runs over all $\ell!$ orders on $E(H)$ and $\bm$
over all $\ell$-tuples $(m_1,\ldots,m_\ell)$ with $0<m_1<m_2<\cdots<m_\ell \le m$,
and $E_{H,\prec,\bm}$ is the event that $E_H$ holds,
with the edges of $H$ appearing during the process $(G_i)$ in the order $\prec$,
with the $j$th edge appearing at step $m_j$.%

For the moment, fix $\prec$ and $\bm$, and let $E=E_{H,\prec,\bm}$.
Let the edges of $H$, in the order $\prec$, be $f_1,\ldots,f_\ell$.
Recall that $e_i$
is the random edge added to $G_{i-1}$ to form $G_i$. Thus $E$ is
the event $A_{\le m}=A_1\cap\cdots\cap A_m$ where
\[
 A_{m_j} = \{ e_{m_j} = f_j \}
\]
for $j=1,\ldots,\ell$, and
\[
 A_i = \{ e_i \hbox{ joins two vertices outside }V \}
\]
for all $i\le m$ that are not equal to any $m_j$.%

As before, let $\good_i$ be the event that $|I_i/n-\rho_1(i/n)|\le \omega n^{-1/2}$  
and let $\good_{\le i-1}=\bigcap_{j\le i-1} \good_j$.
Whenever $\good_{\le i-1}$ holds, we have
$I_{i-1}/n=\rho_1((i-1)/n)+O(\omega n^{-1/2})=\rho_1(i/n)+O(\omega n^{-1/2})$.
Also, whenever $A_{\le i-1}$ holds, then in $G_{i-1}$ the edges incident with the vertices of
$V$ are precisely those edges $f_j\in E(H)$ for which $m_j<i$.
Hence, from \eqref{pisol} and \eqref{pnisol}, for each $j\le \ell$,
whenever $A_{\le m_j-1}\cap \good_{\le m_j-1}$ holds we have
\[
 \Pr(A_{m_j} \mid \cF_{m_j-1}) = 2n^{-2}(\alpha_j-\rho_1(m_j/n)^2)+O(\omega n^{-5/2}),
\]
where $\alpha_j$ (which depends on $H$ and on $\prec$) is equal to $2$ if none of
$f_1,\ldots,f_{j-1}$ shares a vertex with $f_j$, and is $1$ otherwise.
For later use, we write this bound as
\begin{equation}\label{cp1}
 \Pr(A_{m_j} \mid \cF_{m_j-1}) = n^{-2} g_j(m_j/n) +O(\omega n^{-5/2}),
\end{equation}
where
\begin{equation}\label{gj}
 g_j(t)=g_{H,\prec,j}(t)=2(\alpha_j-\rho_1(t)^2).
\end{equation}
Note that $g_j$ depends on $H$ and on the order $\prec$,
but only via the isomorphism type of the ordered graph $(H,\prec)$.%

Suppose that $i\le m$ is not one of the $m_j$. Then $m_j<i<m_{j+1}$ for some
$j=0,1,\ldots,\ell$, where we take $m_0=0$ and $m_{\ell+1}=m+1$.
Let $\beta_j$ be the number of vertices in $V$ that are not incident with any
of $f_1,\ldots,f_j$. Then whenever $A_{\le i-1}\cap \good_{\le i-1}$ holds,
in $G_{i-1}$ there are precisely $\beta_j$ isolated vertices in $V$
and $\rho_1(i/n)n+O(\omega\sqrt{n})+O(k)=\rho_1(i/n)n+O(\omega\sqrt{n})$
isolated vertices outside $V$.
Since there are in total $k(n-k)=kn+O(k^2)$ possible edges between $V$ and $V^\cc$
and $\binom{k}{2}-j=O(k^2)$ possible edges inside $V$ that could be added at step $i$,
in total there are $kn+O(k^2)$ possible edges whose selection as $e_i$ would 
mean that $A_i$ does not hold, and of these $\beta_j\rho_1(i/n) n+O(k\omega\sqrt{n})$ join two isolated vertices.
It follows from \eqref{pisol} and \eqref{pnisol} that when $A_{\le i-1}\cap \good_{\le i-1}$ holds
and $m_j<i<m_{j+1}$, then writing $\rho_1$ for $\rho_1(i/n)$, we have
\begin{eqnarray*}
 \Pr(A_i^\cc \mid \cF_{i-1}) &=& (kn+O(k^2)) \bb{2n^{-2}(1-\rho_1^2)+O(\omega n^{-5/2})} \\
 &&\quad+\quad (\beta_j\rho_1 n+O(k\omega\sqrt{n})) \bb{2n^{-2}+O(\omega n^{-5/2})}.
\end{eqnarray*}
Hence
\begin{eqnarray*}
 \Pr(A_i^\cc \mid \cF_{i-1})
 &=& 2kn^{-1}(1-\rho_1^2)+2\beta_j\rho_1 n^{-1} + O(k^2n^{-2}+k\omega n^{-3/2}) \\
 &=& \bb{2k(1-\rho_1^2)+2\beta_j\rho_1}n^{-1} +O(k\omega n^{-3/2}) \\
 &=& h_j(i/n) n^{-1} +O(k\omega n^{-3/2}),
\end{eqnarray*}
where
\begin{equation}\label{hj}
 h_j(t) = h_{H,\prec,j}(t) = 2k(1-\rho_1(t)^2)+2\beta_j\rho_1(t).
\end{equation}
Since this probability is of order $kn^{-1}$, using $\log(1-x)=-x-O(x^2)$ for $x\le 1/2$,
we see that under these assumptions,
\begin{equation}\label{cp2}
 \log \Pr(A_i\mid \cF_{i-1})  = -h_j(i/n) n^{-1}+O(k\omega n^{-3/2}).
\end{equation}%

The conditional probability estimates \eqref{cp1} and \eqref{cp2} are valid
only when $A_{\le i-1}\cap \good_{\le i-1}$ holds. But by Lemma~\ref{lcond}, this
is enough to deduce that there is a `cooked' event $\tE=\tE_{H,\prec,\bm}$
that agrees with $E=E_{H,\prec,\bm}$ off $\good=\good_{\le m}$, such that
\begin{multline*}
 \Pr(\tE) = \prod_{j=1}^\ell \bb{n^{-2}g_j(m_j/n)+O(\omega n^{-5/2})} \\
   \exp\left(-\sum_{j=0}^\ell \sum_{m_j<i<m_{j+1}} \bb{h_j(i/n)n^{-1}+O(k\omega n^{-3/2})}\right).
\end{multline*}
From now on we adopt the rather ugly convention that%
\begin{equation}\label{appconv}
 x(n) \approx y(n)\hbox{\quad means\quad}x(n)=y(n)\exp\bb{O(k\omega n^{-1/2})},
\end{equation}
noting that $x\approx y$ implies $x\sim y$ by \eqref{kosn}. On a first reading, the reader
may wish to read $\approx$ simply as $\sim$; the more precise error estimate will be 
relevant only in Section~\ref{sec_supcrit}. With this convention,
since the term $O(k\omega n^{-3/2})$ appears $m-\ell\le m=O(n)$ times in the formula above,
we have
\[
 \Pr(\tE) \approx n^{-2\ell} \prod_{j=1}^\ell \bb{g_j(m_j/n)+O(\omega n^{-1/2})}\ \exp\left(-n^{-1}\sum_{j=0}^\ell \sum_{m_j<i<m_{j+1}} h_j(i/n)\right).
\]%

Let us remark that the calculations that follow would be much simpler if the error terms were
all multiplicative, i.e., we could replace $g_j(t)+O(\omega n^{-1/2})$ by $g_j(t)(1+O(\omega n^{-1/2}))$. Unfortunately,
this is not the case when $g_j$ corresponds to joining non-isolated vertices and $t$
is small: then $g_j(t)$ is of order $1-\rho_1(t)^2$, which is small for $t$ near $0$.
Let us write $t_j$ for $m_j/n$, $1\le j\le \ell$, and $t_0=0$ and $t_{\ell+1}=t=m/n$.%

Noting that the functions $g_j$ and $h_j$ do not depend on $n$ (only on the isomorphism
type of $(H,\prec)$) and are smooth,
and in particular that $h_j$ and its derivatives are $O(k)$,  it is easy to see that
\[
 n^{-1}\sum_{m_j<i<m_{j+1}} h_j(i/n) = \int_{t_j}^{t_{j+1}} h_j(x) \dx +O(k/n).
\]
Since $k\ell/n\le k/\sqrt{n} \le k\omega/\sqrt{n}=o(1)$, writing $\eta$ for $\omega n^{-1/2}$, it follows that
\begin{equation}\label{Eb1}
 \Pr(\tE_{H,\prec,\bm}) \approx n^{-2\ell} \prod_{j=1}^\ell \bb{g_j(t_j)+O(\eta)}\ \exp\left(-\sum_{j=0}^\ell \int_{t_j}^{t_{j+1}} h_j(x) \dx \right).
\end{equation}
Our next aim is to sum this expression over all $\bm=(m_1,m_2,\ldots,m_\ell)$ with $0<m_1<\cdots<m_\ell\le m$, 
and then approximate the sum by an integral. This approximation step could be circumvented by passing
to continuous time from the beginning, at the cost of (mostly notational) complications elsewhere. Still,
we spell it out in some detail, even though it is not at all surprising.%

Write $F(t_1,\ldots,t_\ell)=F_{H,\prec}(t_1,\ldots,t_\ell)$ for the term inside the exponential
in \eqref{Eb1}, without the minus sign. Note from \eqref{hj} that for $1\le j\le \ell$ we have
\begin{equation}\label{delF}
 \frac{\partial F}{\partial t_j} = h_{j-1}(t_j)-h_j(t_j) = 2(\beta_{j-1}-\beta_j)\rho_1(t_j) = O(1),
\end{equation}
since adding an edge can destroy at most two isolated vertices, so $|\beta_j-\beta_{j-1}|\le 2$. 
We claim that the sum
\[
 S = n^{-2\ell} \sum_{0<m_1<\cdots<m_\ell\le m} 
    \prod_{j=1}^\ell \bb{g_{H,\prec,j}(t_j)+O(\eta)}\exp(-F_{H,\prec}(t_1,\ldots,t_\ell)),
\]
with $t_j=m_j/n$, can be bounded within a factor $\exp(O(k\omega/\sqrt{n}))$ by the integral
\[
 I =  n^{-\ell} \int_{0<t_1<\cdots<t_\ell\le t} \prod_{j=1}^\ell \bb{g_{H,\prec,j}(t_j)+O(\eta)}\exp(-F_{H,\prec}(t_1,\ldots,t_\ell)).
\]
More precisely, this means that for all $C$ there exists $C'$ such that taking the implicit constant in the $O(\eta)$
terms in $S$ to be $C$, any quantity satisfying this upper/lower bound is bounded by $\exp(O(k\omega/\sqrt{n}))$
times something satisfying the bound given for $I$ with $C'$ as the implicit constant. To see this, note that
we can think of both expressions as of the form
\[
 \int_\cS  \   \prod_{j=1}^\ell \bb{g_{H,\prec,j}(t_j)+O(\eta)}\exp(-F_{H,\prec}(t_1,\ldots,t_\ell)) \,\dnu(t_1,\ldots,t_\ell),
\]
where $\cS$ is the simplex $\{(t_1,\ldots,t_\ell): 0<t_1<\cdots<t_\ell\le t\}\subset [0,t]^\ell$ and $\nu$ is an appropriate measure
on $\cS$: for $I$ we just take $\nu=\nu_I$ to be $n^{-\ell}$ times Lebesgue measure; for $S$ we take $\nu_S$
to be the discrete measure assigning mass $n^{-2\ell}$
to each element of the set $\cS_0$ of sequences $0<t_1<\cdots<t_\ell\le t$ in which every $t_i$ is a multiple of $1/n$.
To show that $S$ and $I$ are close, we show that the measures are close, if we allow ourselves to `shift' the points $t_i$ slightly.
First note that shifting every $t_i$ by an amount that is $O(\ell/n)=O(k/n)$ does not affect the integrand significantly. Indeed,
since each $g_j$ is $O(1)$-Lipschitz, it changes by $O(k/n)=o(\eta)$, an amount that is easily absorbed into the $+O(\eta)$
error term. Secondly, from \eqref{delF}, the total effect on $F_{H,\prec}$ is to change it by order
$O(\ell k/n)=O(k^2/n)=O(k\omega/\sqrt{n})$, giving a multiplicative error within the bound we are aiming for.
To compare the measures, first consider starting with $(t_1,\ldots,t_{\ell})\in \cS$ and rounding each $t_i$ up to the
nearest multiple of $1/n$. Every $(t_1',\ldots,t_\ell')\in \cS_0$ has preimage a cube of volume $n^{-\ell}$;
since $|t_i-t_i'|\le 1/n=O(\ell/n)$ this gives the required upper bound on $S$ of the form $\approx I$.
(We do not get a lower bound this way since not all of $\cS$ maps to $\cS_0$: we may have $t_i'=t_{i+1}'$.)
For the reverse bound, consider a different map from $\cS$ to $\cS_0$: first add $2j/n$ to each $t_j$, then
rescale all $t_j$ by $(t-2\ell/n)/t=1-O(\ell/n)$, then round up to the nearest multiple $t_j'$ of $1/n$. It is 
easy to see that this maps $\cS$ into $\cS_0$, that $|t_j-t_j'|=O(\ell/n)$ for all $j$, and that the preimage
of any point of $\cS_0$ has volume at most $n^{-\ell}(1-O(\ell/n))^{-\ell} = n^{-\ell}(1+O(\ell^2/n)) \approx n^{-\ell}$,
recalling that $\ell=O(k)=o(\sqrt{n})$ and so $\ell^2/n = O(k/\sqrt{n})=O(k\omega/\sqrt{n})$. This gives an upper
bound on $I$ in terms of $S$, and hence a lower bound on $S$ in terms of $I$,
that combined with the bound above, completes the approximation.
Summarising, from \eqref{Eb1} and the approximation above we have%
\begin{multline*}
\sum_{0<m_1<\cdots<m_\ell\le m} \Pr(\tE_{H,\prec,\bm}) \\
 \approx n^{-\ell} \int_{0<t_1<\cdots<t_\ell\le t} \prod_{j=1}^\ell \bb{g_{H,\prec,j}(t_j)+O(\eta)}\exp(-F_{H,\prec}(t_1,\ldots,t_\ell)).
\end{multline*}%

Recall that $E_H$ (the event that the components of $H$ are present as components of $G_m$)
is the disjoint union of the events $E_{H,\prec,\bm}$. Following the convention
described in Remark~\ref{RtE},
summing over orders $\prec$ we obtain the formula
\begin{equation}\label{PtE}
 \Pr(\tE_H) = \sum_\prec \sum_{\bm} \Pr(\tE_{H,\prec,\bm}) \approx n^{-\ell} \mu_H
\end{equation}
where $\mu_H=\mu_{H,t,n}$ is given by
\begin{equation}\label{muH}
 \mu_H = \sum_\prec  \int_{0<t_1<\cdots<t_\ell<t} \prod_{j=1}^\ell \bb{g_{H,\prec,j}(t_j)+O(\eta)}\exp(-F_{H,\prec}(t_1,\ldots,t_\ell)).
\end{equation}%

We are perhaps abusing notation here; $\mu_H$ does not denote a single quantity, but rather a range -- we use it
simply as short-hand for the formula above. Let $B$ be the constant implicit in the $O(\eta)$ notation,
and define $\mu_H^+$ by taking $B\eta$ in place of each $O(\eta)$ term in \eqref{muH}. Similarly,
define the minimum value $\mu_H^-$ by taking $-B\eta$ or, if $g_{H,\prec,j}(t_j)<B\eta$ for some $j$, setting $\mu_H^-=0$.
Then any occurrence of $\mu_H$ stands for some quantity in the range $[\mu_H^-,\mu_H^+]$,
with different occurrences perhaps being different. (This is the usual behaviour of $O(\cdot)$ notation.)
Occasionally, when giving upper bounds, we shall be more explicit and work with $\mu_H^+$.%

Set
\begin{equation}\label{muH0}
 \mu_H^0 = \mu_{H,t}^0= \sum_\prec  \int_{0<t_1<\cdots<t_\ell<t} \prod_{j=1}^\ell g_{H,\prec,j}(t_j)\exp(-F_{H,\prec}(t_1,\ldots,t_\ell)),
\end{equation}
a quantity that depends only on $t$ and on the isomorphism type of $H$,
in contrast to $\mu_H^\pm$, which also depend on $n$ via $\eta$.
Since $\eta\to 0$ as $n\to\infty$, for $H$ and $t$ fixed we have
\begin{equation}\label{muHconv}
 \mu_H\to \mu_H^0 \hbox{ \ as \ }n\to\infty.
\end{equation}%

Comparing the formulae \eqref{gj} and \eqref{hj} to \eqref{contp1} and \eqref{contp2}, or simply noting
that the arguments in this section bound the deviation of the behaviour of the edges of $G_m$
incident with $V$ from the idealised description given in Subsection~\ref{ss_ct},
we see that $\mu_H^0$ is exactly the coefficient of $n^{-\ell}$ (the leading term) in the
probability that in the continuous-time model, $E_H$ holds at time $t$.%

Let $\cT_k$ denote the set of all $k^{k-2}$ trees on $[k]$, and define
\begin{equation}\label{muk}
 \mu_k = \mu_{k,t,n} = \frac{1}{k!} \sum_{T\in \cT_k} \mu_T \hbox{ \ and \ } \mu_k^0 = \mu_{k,t}^0 = \frac{1}{k!}\sum_{T\in \cT_k} \mu_T^0.
\end{equation}
For any $k=o(\sqrt n/\omega)$, by \eqref{PtE} the number $T_k$ of $k$-vertex tree components of $G_m$ satisfies
\begin{equation}\label{etT}
 \E \tT_k = \binom{n}{k} \sum_{T\in \cT_k} \Pr(\tE_T) \approx \frac{n^k}{k!} \sum_{T\in \cT_k} n^{-(k-1)}\mu_T = n\mu_k,
\end{equation}
where $\tT_k$ (the sum of the indicator functions of the relevant events $\tE_{T,\prec,\bm}$) is
a random variable that agrees with $T_k$ on the (very likely) event $\good$.%

From \eqref{muHconv}, it follows that for $k$ and $t$ fixed we have
\footnote{We are `cheating' slightly here: we know that $T_k=\tT_k$ with probability $1-o(1)$ (in particular,
on the event $\good$), but this does not itself imply that $\E T_k\sim \E \tT_k$. However, greatly simplified
versions of the arguments in Subsections~\ref{ss_basic} and \ref{ss_moments} show that for $k$ fixed, $\E \tT_k^2=O(n^2)$.
(For this we need only \eqref{muHconv} rather than Lemma~\ref{mkclose}, so there is no circularity.)
Since $T_k\le n$, so $\E(T_k^2)=O(n^2)$, it follows by Cauchy--Schwarz that $\E |T_k-\tT_k| =o(n)$,
so indeed $\E T_k\sim \E \tT_k$.}
\[
 \E T_k \sim \E \tT_k \sim n\mu_{k,t}^0.
\]
Since the probability that a given edge is added at a given step is $O(n^{-2})$,
the probability that a given set of $k=O(1)$ vertices forms a non-tree component in $G_m$
is $O(m^kn^{-2k})=O(n^{-k})$. It follows that for $k,t$ fixed,
\[
 \E N_k(G_m) = k \E T_k +O(1) \sim  k \E T_k \sim nk\mu_{k,t}^0.
\]
From \eqref{conv} we have $\E N_k(G_m)\sim n\rho_k(t)$, so
\begin{equation}\label{ident2}
 \mu_{k,t}^0 = \frac{\rho_k(t)}{k}
\end{equation}
for all $k\ge 1$ and $t>0$.%

It remains to handle the error terms in $\mu_H$ when $|H|$ grows with $n$, to deal with non-tree
components, and to perform a second moment estimate.%

\subsection{Non-tree components}\label{ss_nontree}
In the Erd\H os--R\'enyi model with $p=\Theta(1/n)$, if $k=o(\sqrt{n})$ then
it is easy to see that the expected number of $k$-vertex non-tree components
is much smaller than the expected number of $k$-vertex tree components:
the key observation is that any non-tree is obtained by adding some number
$r>0$ of edges to a tree, that there are at most $\binom{k}{2}=o(n)$ choices for each
added edge, and that each edge reduces the probability by a factor $p$.
In this subsection we formalize a similar argument for $G_m$. The problem
is that we do not have independence. Nevertheless, it is easy to see that
each extra edge reduces our upper bound on the probability of a certain subgraph
by a sufficient factor.%

Let $k=k(n)=o(\sqrt{n}/\omega)$ and let $1\le r=r(n)\le \binom{k}{2}-k+1$.
Let $\cC_{k,r}$ be the set of connected graphs on $[k]$ with $(k-1)+r$ edges,
and let $C_{k,r}$ be the number of components of $G_m$ that are isomorphic
to graphs in $\cC_{k,r}$, i.e., have $k$ vertices and $r$ excess edges.
Breaking down the event $E_H$ that a particular $H\in \cC_{k,r}$ is a component
of $G_m$ as before, we have
\[
 \E\tC_{k,r} = \binom{n}{k}\sum_{H\in \cC_{k,r}} \Pr(\tE_H)
  = \binom{n}{k} \sum_{H\in \cC_{k,r}} \sum_{\prec} \sum_{\bm}  \Pr(\tE_{H,\prec,\bm}).
\]
Given $H\in \cC_{k,r}$ and an order $\prec$ on $E(H)$, let $T\subset H$ be the spanning tree of $H$
formed by adding the edges of $H$ one-by-one in order, including only those edges that join
different components of the current graph. Let $R=E(H)\setminus E(T)$ be the
set of `redundant' edges, and let $\prec'$ be the order on $E(T)$ induced by $\prec$.
Then
\[
 \E\tC_{k,r} = \binom{n}{k} \sum_{T\in \cT_{k}} \sum_{\prec'} \sum_{\bm'} \sum_R \sum_\prec \sum_{\bm} \Pr(\tE_{H,\prec,\bm}),
\]
where $\prec'$ runs over all orders on $E(T)$, $\bm'$ over all $(k-1)$-tuples $0<m_1'<\cdots<m_{k-1}'\le m$,
$R$ runs over all sets of $r$ edges of $T^\cc$, $\prec$ over orders on $E(T)\cup R$
extending $\prec'$ and such that with $H=T\cup R$, $R$ is indeed the set of redundant edges,
and $\bm$ over all $E(H)$-tuples  $\bm$ that are compatible with $\bm'$. In other words, we choose in what order and
when the edges of $T$ appear, then we choose which redundant edges will appear, and when.%

As before, we have an estimate for each $\Pr(\tE_{H,\prec,\bm})$ as a product of factors for each time step,
of the form \eqref{cp1} or \eqref{cp2} for steps at which an edge of $H$ does or does not appear.
Let $P_{H,\prec,\bm}$ denote this estimate (or, to be concrete, the upper bound).  
We compare $P_{H,\prec,\bm}$ with the corresponding estimate $P_{T,\prec',\bm'}$
for $\Pr(\tE_{T,\prec',\bm'})$.
Because the edges of $R$ are redundant, at any given stage the edges of $H$ which have appeared
induce the same component structure on $[k]$ as those of $T$ which have appeared by the same
point. Thus we have the same estimate \eqref{cp1} or \eqref{cp2} for every step except those
at which the redundant edges appear. For these, in the case of $H$ we have some
probability of order $O(n^{-2})$, say at most $Cn^{-2}$, replacing a probability in
the case of $T$ that is $1-o(1)$ (see \eqref{cp2}), and is hence (for $n$ large) 
at least $1/2$.
Since there are at most $k^{2r}$ choices for the edges of $R$, and $m^r$ choices
for the steps at which they appear (which then specifies the order), we see 
that%
\footnote{Note that we cannot directly compare the expectations, only our bounds: in all steps the conditional
probabilities differ slightly, but except for those where redundant edges are added, the same
bound applies in both cases.}
\[
 \E\tC_{k,r} \le 
  k^{2r}m^r(2C/n^2)^r \binom{n}{k} \sum_{T\in \cT_{k}} \sum_{\prec'} \sum_{\bm'} P_{T,\prec',\bm'}.
\]
Since the argument leading to \eqref{PtE} bounded the estimates $P$, it follows from \eqref{PtE} that
\begin{eqnarray*}
 \E\tC_{k,r} 
 &\le&  (1+o(1))k^{2r}m^r(2C/n^2)^r \binom{n}{k} \sum_{T\in \cT_k} n^{-(k-1)} \mu_T^+ \\
  &\le& (1+o(1))(2Ck^2mn^{-2})^r n\mu_k^+,
\end{eqnarray*}
where, in analogy with \eqref{muk}, we set $\mu_k^+=\frac{1}{k!}\sum_{T\in \cT_k}\mu_T^+$.%

Finally, since $k^2mn^{-2}=O(k^2/n)=o(1)$, we conclude that
\begin{equation}\label{non-tree}
 \E\sum_{r\ge 1}\tC_{k,r} =O\left( \frac{k^2}{n} n\mu_k^+\right) = o(n\mu_k^+).
\end{equation}%

\subsection{Refined tree counts}\label{ss_refined}
To deal with the additive nature of the $+O(\eta)$ error terms in \eqref{muH} we need to understand
the behaviour of $\mu_{H,t}^0$ slightly better. Fix a graph $H$ on $[k]$ with $\ell$ edges,
and enumerate the edges of $H$ arbitrarily as $e_1,e_2,\ldots,e_\ell$. Thinking of $t_i$ as the `time'
at which edge $e_i$ appears in the continuous-time model of Subsection~\ref{ss_ct}, then using
this model, or arguing directly from \eqref{muH0}, we have the alternative formula
\begin{equation}\label{muH02}
 \mu_H^0 = \mu_{H,t}^0 = \int_{t_1=0}^t\cdots \int_{t_\ell=0}^t \prod_{j=1}^\ell \hg_{H,\prec,j}(t_j) e^{-\hF_H(t_1,\ldots,t_\ell)},
\end{equation}
where $\prec$ is the order on $E(H)$ corresponding to $t_1,\ldots,t_\ell$,
$\hg_{H,\prec,j}(t)=2(\alpha-\rho_1(t)^2)$ with $\alpha=2$ if in the subgraph of $H$ formed by
edges arriving before $e_j$ in the order $\prec$ both ends of $e_j$ are isolated,
and $\alpha=1$ otherwise, and
\begin{equation}\label{hF}
 \hF_H(t_1,\ldots,t_\ell) = 2k\int_{t'=0}^t (1-\rho_1(t')^2) + 2\sum_{v\in [k]} \int_{t'=0}^{t_v} \rho_1(t'),
\end{equation}
where $t_v=\min\{t_i: e_i \hbox{ meets }v\}$ is the time at which $v$ stops being isolated,
with $t_v=t$ if $v$ is isolated in $H$. Similarly,
from \eqref{muH} we have that
\begin{equation}\label{muH2}
 \mu_H = \mu_{H,t,n} = \int_{t_1=0}^t\cdots \int_{t_\ell=0}^t \prod_{j=1}^\ell
   \bb{\hg_{H,\prec,j}(t_j)+O(\eta)} e^{-\hF_H(t_1,\ldots,t_\ell)},
\end{equation}
where, as before, $\eta=\omega n^{-1/2}$.

From the description of the continuous-time model, or directly from \eqref{muH02},
it is easy to see that $\mu_H^0$ is multiplicative: if $H_1$ and $H_2$ are vertex-disjoint, then
\[
 \mu_{H_1\cup H_2,t}^0 = \mu_{H_1,t}^0\, \mu_{H_2,t}^0.
\]
Moreover, from \eqref{muH2}, the same is true of $\mu_H$ or, more concretely, of the upper
bound $\mu_H^+$ where we replace $O(\eta)$ by $B\eta$: for vertex-disjoint $H_1$ and $H_2$ we have
\begin{equation}\label{muHpmult}
  \mu_{H_1\cup H_2,t,n}^+ = \mu_{H_1,t,n}^+\, \mu_{H_2,t,n}^+.
\end{equation}%

Consider the contribution to \eqref{muH2} arising from taking the $O(\eta)$ term in the
final term in the product. This is very similar to $tO(\eta)=O(\eta)$ times
the formula for $\mu_{H-e_\ell}$. In fact, there are only two differences:%

\smallskip
(i) each factor $\hg_{H,\prec,j}(t_j)$, $j\ne \ell$, is less than or equal to
the corresponding factor $\hg_{H-e_\ell,\prec',j}(t_j)$, where $\prec'$ is $\prec$ restricted
to $e_1,\ldots,e_{\ell-1}$. The reason is that if the ends of $e_j$ are isolated
in the relevant subgraph of $H$, they are certainly isolated in the corresponding subgraph
of $H-e_\ell$, obtained by deleting $e_\ell$ if it is present, and the factor corresponding 
to the isolated vertices case is larger.\footnote{This is the only
part of the argument that is specific to the Bohman--Frieze rule. Elsewhere, everything
adapts \emph{mutatis mutandis} to general bounded-size rules. Here we can handle bounded-size rules
with a suitable monotonicity property with essentially no change to the argument.
It should be possible to handle the general
bounded-size case with a weaker error term, in the end treating components up to size $n^c$
for some $0<c<1/2$ that depends on the rule; the problem is simply bounding the additive
error term, and a solution might be to show that the contribution from terms where $g(\cdot)$ is at most
some suitable negative power of $k$ is negligible.}%

(ii) The quantities $\hF_H(t_1,\ldots,t_\ell)$ and $\hF_{H-e_\ell}(t_1,\ldots,t_{\ell-1})$ differ,
but by at most $O(1)$. This is because the presence or absence of $e_\ell$ only affects
whether two vertices (its ends) are isolated, and so only affects two terms in the sum in~\eqref{hF}.%

\smallskip
It follows that, overall, this term in the expansion contributes at most $O(\eta)\mu_{H-e_\ell}$ to $\mu_H$.
A similar argument applies to all other terms in the product, and we conclude that
\begin{equation}\label{mudiff}
 |\mu_H -\mu_H^0| \le \sum_{e\in E(H)} C\eta\mu_{H-e}^+,
\end{equation}
for some constant $C$, where $\mu_{H-e}^+$ is the `worst case' instance of
the formula \eqref{muH} for $H-e$, obtained by replacing all
$+O(\eta)$ error terms  by $+B\eta$ for an appropriate
constant $B$. Using this bound and the asymptotics of $\rho_k(t)$ we can show
that for trees, at least in total, $\mu_T$ is close to $\mu_T^0$.%

Recall from \eqref{ident2} that $\mu_{k,t}^0=\rho_k(t)/k$. Thus,
by Theorem~\ref{RW}, for $k\ge 1$ and $t\in I_0$, where $I_0$ is as in \eqref{I0def}, we have
\begin{equation*}
 \mu_{k,t}^0 = \gamma(t) k^{-5/2} e^{-\delta(t) k}(1+O(1/k)),
\end{equation*}
where $\gamma(t)$ is bounded and bounded away from $0$.
In particular, for $t\in I_0$ and any $k\ge 1$ we have
\begin{equation}\label{ktheta}
 \mu_{k,t}^0 = \Theta( k^{-5/2} e^{-\delta(t) k} ).
\end{equation}
\begin{lemma}\label{mkclose}
For any $k=k(n)=o(\sqrt{n}/\omega)$ and $t=t(n)\in I_0$ we have
\[
 \mu_k = (1+O(k\omega/\sqrt{n}))\mu_{k,t}^0 \sim\mu_{k,t}^0,
\]
where $\omega=\omega(n)\to\infty$ is the quantity appearing in \eqref{gooddef}, and $I_0$ is as in \eqref{I0def}.
\end{lemma}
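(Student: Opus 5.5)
We start from \eqref{mudiff}, summed over all trees $T\in\cT_k$ and divided by $k!$. For a tree $T$, each $T-e$ is a spanning forest of $[k]$ with exactly two components, $T_1$ and $T_2$ say, of sizes $a$ and $k-a$. The multiplicativity \eqref{muHpmult} then gives $\mu_{T-e}^+=\mu_{T_1}^+\mu_{T_2}^+$. Consequently
\[
 |\mu_k-\mu_k^0| \le \frac{C\eta}{k!}\sum_{T\in\cT_k}\sum_{e\in E(T)} \mu^+_{T-e}.
\]
To regroup the double sum, note that each pair $(T,e)$ corresponds to an unordered pair of vertex-disjoint trees $T_1,T_2$ covering $[k]$, together with a choice of one of the $|T_1||T_2|=a(k-a)$ edges joining them. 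Counting over ordered splittings (choose the vertex set of $T_1$ of size $a$, then the trees) gives
\[
 \frac{1}{k!}\sum_{T}\sum_{e}\mu^+_{T-e} = \frac12\sum_{a=1}^{k-1}\binom{k}{a}\frac{a(k-a)}{k!}\sum_{T_1\in\cT_a}\mu^+_{T_1}\sum_{T_2\in\cT_{k-a}}\mu^+_{T_2} = \frac12\sum_{a=1}^{k-1}a(k-a)\,\mu_a^+\mu_{k-a}^+,
\]
with $\mu_a^+=\frac1{a!}\sum_{T\in\cT_a}\mu_T^+$ as in Section~\ref{ss_nontree}.

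The argument is an induction on $k$, with hypothesis $\mu_a^+\le 2\mu_a^0$ for all $a<k$, say. This hypothesis has to be proved alongside the main estimate, since $\mu_a^+$ also satisfies the same bound \eqref{mudiff} with a larger constant. Given the hypothesis, \eqref{ktheta} yields
\[
 \sum_{a=1}^{k-1}a(k-a)\mu_a^0\mu_{k-a}^0 = O\Bigl(e^{-\delta(t)k}\sum_{a}a^{-3/2}(k-a)^{-3/2}\Bigr) = O\bigl(k^{-3/2}e^{-\delta(t)k}\bigr) = O\bigl(k\,\mu_k^0\bigr).
\]
The key input here is that $\delta(t)$ enters the exponent linearly, so $e^{-\delta a}e^{-\delta(k-a)}=e^{-\delta k}$ exactly. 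In addition, $\sum_a a^{-3/2}(k-a)^{-3/2}=O(k^{-3/2})$, because the sum is dominated by terms with $a$ or $k-a$ bounded. Since $\delta(t)\ge 0$ on $I_0$ by Theorem~\ref{RW}, no growth arises from the exponential. Combining these estimates, $|\mu_k-\mu_k^0|=O(\eta k\mu_k^0)=O(k\omega n^{-1/2})\mu_k^0$. The same bound holds for $\mu_k^+$, which closes the induction as long as $k\omega/\sqrt n$ stays small; this is exactly the hypothesis \eqref{kosn}.

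The main obstacle is making the induction uniform, so that constants do not compound with $k$. The constant $C$ in \eqref{mudiff} is fixed, and we only ever use the crude bound $\mu_a^+\le 2\mu_a^0$ for $a<k$ rather than feeding back the refined error. The bound at step $k$ is therefore $|\mu_k^\pm-\mu_k^0|\le C'\eta k\mu_k^0$ with $C'$ independent of $k$, and $C'\eta k\le 1$ for $n$ large, uniformly over $k=o(\sqrt n/\omega)$. One must also check that \eqref{mudiff} is legitimate for $\mu_H^+$ and $\mu_H^-$ themselves, with $\mu^+$ on the right-hand side. This follows from the same expansion of the product of the factors $(\hg+B\eta)$: the terms with several $\eta$'s are each dominated by a single-$\eta$ term times a factor $\mu^+$. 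Finally, the base case $k=1$ is trivial, since there $\mu_1=\mu_1^0$ and there are no edges.
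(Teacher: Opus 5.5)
Your proposal is correct and is essentially the paper's own proof. Like the paper, you combine \eqref{mudiff} with the multiplicativity \eqref{muHpmult}, use the $a(k-a)$ count to regroup the pairs $(T,e)$ into $\frac{C\eta}{2}\sum_{a}a(k-a)\mu_a^+\mu_{k-a}^+$, and close an induction on $k$ with the crude hypothesis $\mu_a^+\le 2\mu_a^0$ via \eqref{ktheta} (your appeal to $\delta(t)\ge 0$ is harmless but unnecessary, since $e^{-\delta a}e^{-\delta(k-a)}=e^{-\delta k}$ exactly).
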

\begin{proof}
Recall that $\mu_k$ depends on $t$ and (via $\eta$) on $n$, while $\mu_{k,t}^0$ is independent of $n$.
From \eqref{mudiff} and the multiplicativity \eqref{muHpmult} of $\mu^+$ we have
\[
 |\mu_k-\mu_k^0| \le \frac{C\eta}{k!} \sum_{T\in \cT_k}\sum_{e\in E(T)} \mu_{T_1}^+\mu_{T_2}^+,
\]
where $T_1$ and $T_2$ are the two components of $T-e$. Since two given trees $T_1$ and $T_2$
on complementary subsets of $[k]$ of size $k_1$ and $k_2$ arise from exactly $k_1k_2$ pairs $(T,e)$,
the bound above is exactly
\begin{equation}\label{eq12}
 \frac{C\eta}{2k!}\sum_{k_1+k_2=k} k_1k_2 \binom{k}{k_1} \sum_{T_1\in \cT_{k_1}} \mu_{T_1}^+ \sum_{T_2\in \cT_{k_2}} \mu_{T_2}^+ = \frac{C\eta}{2} \sum_{k_1+k_2=k} k_1k_2\mu_{k_1}^+\mu_{k_2}^+.
\end{equation}
Let $\kmax(n)=o(\sqrt{n}/\omega)$ and $t=t(n)\in I_0$.
We claim that if $n$ is large enough, then for all $1\le k\le \kmax$ we have $\mu_{k,t,n}^+\le 2\mu_{k,t}^0$, say.
The proof is by induction on $k$, with the base case $k=1$ being trivial. For the induction step,
since $k_1,k_2<k$, the induction hypothesis, \eqref{eq12} and \eqref{ktheta} give
\begin{eqnarray}
 |\mu_{k,t,n}-\mu_{k,t}^0| &\le& 2C\eta \sum_{k_1+k_2=k} k_1k_2\mu_{k_1,t}^0 \mu_{k_2,t}^0 \nonumber \\
&=& O(\eta) \sum_{k_1+k_2=k} k_1^{-3/2}k_2^{-3/2} e^{-\delta(t) k} \label{eq13} \\
 &=& O(\eta k^{-3/2} e^{-\delta(t) k}), \nonumber
\end{eqnarray}
since the outer terms dominate the sum up to constant factors.
Since $k\eta\le \kmax\eta=o(1)$, using \eqref{ktheta} again this bound is
$O(k\eta\mu_{k,t}^0)=o(\mu_{k,t}^0)$, and certainly at most $\mu_{k,t}^0$ if $n$
is large enough. Since the bound applies to $\mu_{k,t,n}^+$ this completes the induction proof.
Applying \eqref{eq13} a final time, we obtain that
\[
 |\mu_{k,t,n}-\mu_{k,t}^0|=O(k\eta \mu_{k,t}^0) = O(k\omega n^{-1/2} \mu_{k,t}^0),
\]
completing the proof of the lemma.
\end{proof}%

Combining the results so far, we have established the asymptotics of the number
of $k$-vertex components for all $k=o(\sqrt{n}/\omega)$. Recall the notational convention \eqref{appconv}.
\begin{lemma}\label{lsofar}
Let $k=k(n)=o(\sqrt{n}/\omega)$ and $m=m(n)\in nI_0$. Setting $t=t(n)=m/n$ we have
\[
 \E \tT_k \approx n\mu_k \approx n\mu_{k,t}^0 = \frac{n}{k}\rho_k(t),
\]
and
\[
 \E \tC_k = O(k\rho_k(t)) = o(n\rho_k(t)/k),
\]
where $C_k$ is the number of non-tree components of $G_m^{(n)}$
of size $k$, and $\tC_k$ is the corresponding `cooked' random variable.
\end{lemma}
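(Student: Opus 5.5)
The plan is to assemble Lemma~\ref{lsofar} directly from \eqref{etT}, Lemma~\ref{mkclose}, \eqref{ident2} and the non-tree bound \eqref{non-tree}. Since the statement is essentially a summary of these, the work lies only in checking that the error terms are of the form required by the convention \eqref{appconv}, and in converting $\mu_k^+$ into $\rho_k(t)$ in the non-tree estimate.

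\textbf{Tree components.} For the first chain, \eqref{etT} already gives $\E\tT_k\approx n\mu_k$. Here $\mu_k$ stands for some quantity in the range $[\mu_k^-,\mu_k^+]$ given by the formula \eqref{muH}, summed over trees as in \eqref{muk}. Lemma~\ref{mkclose} gives $\mu_k=(1+O(k\omega/\sqrt n))\mu_{k,t}^0$, and it applies to every instance of the formula, in particular to $\mu_k^\pm$, since the inductive bound \eqref{eq13} controls $|\mu_{k,t,n}-\mu_{k,t}^0|$ uniformly. Because $k\omega/\sqrt n=o(1)$, the factor $1+O(k\omega/\sqrt n)$ equals $\exp(O(k\omega n^{-1/2}))$, so $\mu_k\approx\mu_{k,t}^0$ in the sense of \eqref{appconv}. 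Finally, \eqref{ident2} gives $\mu_{k,t}^0=\rho_k(t)/k$, so $n\mu_{k,t}^0=\frac nk\rho_k(t)$.

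One point needs care. The identity \eqref{ident2} was derived for fixed $k$ and $t$, but as an identity between $n$-independent quantities it holds for every $k\ge1$ and $t>0$. It may therefore be applied with $k=k(n)$ and $t=t(n)$ varying.

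\textbf{Non-tree components.} By definition $C_k=\sum_{r\ge1}C_{k,r}$. The bound preceding \eqref{non-tree} gives
\[
 \E\tC_k\le(1+o(1))\sum_{r\ge1}\bigl(2Ck^2mn^{-2}\bigr)^r n\mu_k^+.
\]
Since $m=O(n)$, the ratio satisfies $2Ck^2mn^{-2}=O(k^2/n)=o(1)$, so the geometric series is $O(k^2/n)$. This yields $\E\tC_k=O(k^2\mu_k^+)$. By the claim established inside the proof of Lemma~\ref{mkclose}, $\mu_k^+\le2\mu_{k,t}^0=2\rho_k(t)/k$ for $n$ large. Hence $\E\tC_k=O(k\rho_k(t))$.

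The final comparison is immediate: $k\rho_k(t)=o(n\rho_k(t)/k)$ is equivalent to $k^2=o(n)$, which holds since $k=o(\sqrt n/\omega)$.

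\textbf{Main obstacle.} The only delicate point is making sure the bound $\mu_k^+\le 2\mu_{k,t}^0$ and the approximation in Lemma~\ref{mkclose} hold uniformly for $t=t(n)\in I_0$ and $k\le k(n)$. This is what the induction in that proof provides, via the uniformity of \eqref{ktheta} on $I_0$ coming from Theorem~\ref{RW}. So I expect no genuine difficulty beyond bookkeeping with the $\approx$ convention.
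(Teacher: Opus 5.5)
Your proposal is correct and follows the paper's own proof. The tree part comes from \eqref{etT}, Lemma~\ref{mkclose} and \eqref{ident2}. For the non-tree part, you sum the geometric bound behind \eqref{non-tree} over $r$ and control $\mu_k^+$ via the induction in Lemma~\ref{mkclose}. The paper phrases that last step as $\mu_k^+\sim\mu_k$, and your bound $\mu_k^+\le 2\mu_{k,t}^0$ amounts to the same thing.
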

\begin{proof}
The first statement follows from \eqref{etT},
Lemma~\ref{mkclose} and \eqref{ident2}. The second follows from \eqref{non-tree},
noting that $\tC_k=\sum_{r\ge 1} \tC_{k,r}$,
and recalling that Lemma~\ref{mkclose} applies with $\mu_k^+$ in place of $\mu_k$,
to give $\mu_k^+\sim \mu_k$.
\end{proof}%

\subsection{Higher moments}\label{ss_moments}
Let $T_{k_1,k_2}$ denote the number of ordered pairs $(T_1,T_2)$ of distinct tree components of $G_m$
where $T_i$ has $k_i$ vertices; here, as usual, $k_i=k_i(n)=o(\sqrt{n}/\omega)$. Writing $T_{k_1,k_2}$
as a sum of indicator functions of the corresponding events $\tE_{T_1\cup T_2}$, and defining
$\tT_{k_1,k_2}$ as the sum of the indicator functions of the corresponding `cooked' events
$\tE_{T_1\cup T_2}$, we see from \eqref{PtE} that\footnote{Recall that our notation $\approx$
hides errors of order $k\omega/\sqrt{n}$. Here $k=k_1+k_2$.}
\begin{eqnarray*}
 \E \tT_{k_1,k_2} &=& \binom{n}{k_1} \binom{n-k_1}{k_2}
      \sum_{T_1\in \cT_{k_1}}  \sum_{T_2\in \cT_{k_2}} \Pr( \tE_{T_1\cup T_2} ) \\
  &\approx& \frac{n^{k_1+k_2}}{k_1! k_2!} \sum_{T_1\in \cT_{k_1}}  \sum_{T_2\in \cT_{k_2}} n^{-(k_1-1+k_2-1)} \mu_{T_1\cup T_2},
\end{eqnarray*}
where $\mu_H$ is a quantity satisfying the bound in \eqref{muH}, or, equivalently, \eqref{muH2}.
Since the formula \eqref{muH2} is multiplicative over disjoint unions, we thus have
\begin{equation}\label{EtT12}
 \E \tT_{k_1,k_2} \approx  \frac{n}{k_1!} \sum_{T_1\in \cT_{k_1}} \mu_{T_1}
      \frac{n}{k_2!} \sum_{T_2\in \cT_{k_2}} \mu_{T_2}  = n^2 \mu_{k_1}\mu_{k_2} \approx n^2 \mu_{k_1}^0\mu_{k_2}^0,
\end{equation}
where the last step is from Lemma~\ref{mkclose}.
Let $K_1<K_2=o(\sqrt{n}/\omega)$ and set
\[
  X=\sum_{k=K_1}^{K_2-1} T_k(G_m) \hbox{ \ and \ }  \tX=\sum_{k=K_1}^{K_2-1} \tT_k(G_m),
\]
so $X=\tX$ with probability $1-o(1)$. Expressing $\tX$ as a sum of the indicator functions of the
(cooked) events that particular trees are present, by Lemma~\ref{lsofar} we have
\[
 \E\tX = \sum_{k=K_1}^{K_2-1} \E \tT_k \sim n\sum_{k=K_1}^{K_2-1}\mu_k^0 = n\sum_{k=K_1}^{K_2-1}\frac{\rho_k(t)}{k},
\]
while from \eqref{EtT12}
\[
 \E[\tX^2-\tX] = \sum_{k_1=K_1}^{K_2-1} \sum_{k_2=K_1}^{K_2-1}  \E \tT_{k_1,k_2} \sim \E[\tX]^2.
\]
It follows by the first and second moment methods that if $\E\tX\to 0$, then $\tX=0$ whp so $X=0$ whp,
and that if $\E\tX\to\infty$, then $\tX/\E\tX\pto 1$, and in particular that $\tX>0$ whp and so $X>0$ whp.%

The same argument applies, \emph{mutatis mutandis}, to higher moments: we obtain that for fixed $r$,
the $r$th factorial moment of $\tX$ is asymptotically $(\E\tX)^r$. In the case where $\E\tX=\Theta(1)$
this gives convergence in distribution to a Poisson distribution, and in particular that
$\Pr(X=0)=\Pr(\tX=0)+o(1)= e^{-\E\tX}+o(1)$. This gives the following result.%

\begin{lemma}\label{lmain}
Let $m=m(n)\in nI_0$, and set $t=t(n)=m/n$.
Let $\good$ be the event defined in \eqref{gooddef}, and suppose that $K_1(n)<K_2(n)=o(\sqrt{n}/\omega)$.
Set $\lambda(n)=\sum_{k=K_1}^{K_2-1} n \rho_k(t)/k$, 
and define $X$ as above to be the number of tree components of $G_m^{(n)}$
with between $K_1$ and $K_2-1$ vertices, and $Y$ the number of non-tree components
with between $K_1$ and $K_2-1$ vertices. Then
\[
 \Pr(\{X>0\} \cap \good) \le (1+o(1))\lambda(n),
\]
and
\begin{equation}\label{Ybd}
 \Pr(\{Y>0\} \cap \good) = o(\lambda(n)).
\end{equation}
Furthermore, if $\lambda(n)\to 0$ then
$X=Y=0$ whp, if $\lambda(n)\to\infty$ then $X/\lambda(n)\pto 1$ and $Y/\lambda(n)\pto 0$, and if $\lambda(n)=\Theta(1)$
then $\Pr(X=0)=e^{-\lambda(n)}+o(1)$ and $Y=0$ whp.
\end{lemma}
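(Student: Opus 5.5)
We will work throughout with the cooked variables $\tX=\sum_{k=K_1}^{K_2-1}\tT_k$ and $\tY=\sum_{k=K_1}^{K_2-1}\tC_k$. By Lemma~\ref{lcond} and Remark~\ref{RtE}, these agree with $X$ and $Y$ on $\good$. Hence $\{X>0\}\cap\good\subseteq\{\tX>0\}$, and Markov's inequality gives
\[
\Pr(\{X>0\}\cap\good)\le\E\tX.
\]
By Lemma~\ref{lsofar}, $\E\tT_k\approx n\rho_k(t)/k$ uniformly in $k\le K_2=o(\sqrt n/\omega)$. The factor $\exp(O(k\omega n^{-1/2}))$ hidden in $\approx$ is $1+o(1)$ uniformly in this range, so summing over $k$ gives $\E\tX=(1+o(1))\lambda(n)$. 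This proves the first bound. For \eqref{Ybd} we argue in the same way. Lemma~\ref{lsofar} gives $\E\tC_k=O(k\rho_k(t))$, and since $k\le K_2=o(\sqrt n)$ we have $k\rho_k(t)=o(n\rho_k(t)/k)$ uniformly. Summing over $k$ gives $\E\tY=o(\lambda(n))$, and Markov's inequality then bounds $\Pr(\{Y>0\}\cap\good)$.

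The limiting statements follow from these bounds together with $\Pr(\good)\to1$. If $\lambda\to0$, both $\Pr(\{X>0\}\cap\good)$ and $\Pr(\{Y>0\}\cap\good)$ tend to $0$, so $X=Y=0$ whp. If $\lambda=\Theta(1)$, \eqref{Ybd} gives $Y=0$ whp. If $\lambda\to\infty$, then $\E\tY/\lambda\to0$, so $\tY/\lambda\pto0$ by Markov, and hence $Y/\lambda\pto0$.

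For $X$ in the cases $\lambda\to\infty$ and $\lambda=\Theta(1)$ we use factorial moments. For fixed $r$, write the $r$th factorial moment of $\tX$ as a sum over ordered $r$-tuples of distinct $k_i$-vertex trees on disjoint vertex sets. This is the $r$-fold analogue of the computation leading to \eqref{EtT12}, and its ingredients are \eqref{PtE} applied to $H=T_1\cup\dots\cup T_r$, the multiplicativity of the formula \eqref{muH2}, and Lemma~\ref{mkclose}. The total number of vertices is at most $rK_2=o(\sqrt n/\omega)$, so all the earlier estimates remain valid, and the combinatorial prefactor is
\[
\binom{n}{k_1}\binom{n-k_1}{k_2}\cdots=\frac{n^{\sum k_i}}{\prod k_i!}\,(1+O(r^2K_2^2/n)).
\]
The error $O(r^2K_2^2/n)$ is $o(1)$. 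Each factorial moment is therefore $(1+o(1))(\E\tX)^r$. With $r=2$ and $\E\tX\to\infty$, Chebyshev gives $\tX/\E\tX\pto1$, hence $X/\lambda\pto1$. When $\lambda=\Theta(1)$, we pass to a subsequence along which $\lambda$ converges. The method of moments then gives $\tX$ asymptotically Poisson, so $\Pr(\tX=0)=e^{-\lambda}+o(1)$, and $X=\tX$ whp transfers this to $X$.

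The main obstacle is the uniformity of the error terms. The factor hidden in $\approx$ must be $1+o(1)$ uniformly over the whole range of sizes and over $r$-tuples, and the dependence between $r$ trees in the actual process must not spoil the product structure. We handle the uniformity through the convention \eqref{appconv}, which makes the error depend only on total size times $\omega n^{-1/2}$. The dependence is handled by \eqref{PtE}, which was proved for disconnected $H$ precisely so that it would apply to unions of trees like $T_1\cup\dots\cup T_r$.
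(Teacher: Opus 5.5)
Your proposal is correct and follows the paper's proof essentially step for step. Like the paper, you apply Markov's inequality to the cooked variables $\tX,\tY$ with Lemma~\ref{lsofar} to get the two bounds and all the $Y$-statements, and you obtain the $X$-statements from the factorial-moment computation built on \eqref{PtE} for disjoint unions of trees, the multiplicativity of \eqref{muH2}, and Lemma~\ref{mkclose}. Your extra details (uniformity of the hidden $\exp(O(k\omega n^{-1/2}))$ factors, the binomial prefactor, and the subsequence argument in the Poisson case) fill in what the paper leaves implicit; like the paper, you also tacitly assume the cooked events are chosen consistently, so that $\E[\tX(\tX-1)\cdots(\tX-r+1)]$ really equals the sum of $\Pr(\tE_{T_1\cup\cdots\cup T_r})$ over vertex-disjoint tuples.
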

\begin{proof}
For the first statement, note that $\{X>0\}\cap \good$ implies $\tX>0$ and that,
by Lemma~\ref{lsofar}, $\E\tX\sim \lambda(n)$.
Then apply Markov's inequality to $\tX$.
For the second argue similarly for $Y$, recalling that $\E \tY=o(\E\tX)$ by Lemma~\ref{lsofar}.
The remaining statements for $X$ follow from the moment arguments above,
and those for $Y$ from Markov's inequality applied to $\tY$ and the fact that $Y=\tY$ whp.
\end{proof}
Recall that $\Pr(\good)\to 1$ by Lemma~\ref{iconc}. Hence Lemma~\ref{lmain} implies Theorem~\ref{th1}.%

\subsection{The largest small component}
Lemma~\ref{lmain} and a little calculation easily give the following result about the largest
component of $G_m$ of size $o(\sqrt{n})$.%

\begin{lemma}\label{lsmall}
Let $m=m(n)$. Set $t=t(n)=m/n$ and $\eps=\eps(n)=t-\tc$, and suppose that $|\eps|\le \eps_0$ and
that \eqref{epscond} holds.
Let $\omega(n)\to\infty$ slowly, and let $L_1^*$ denote the maximum $k\le \sqrt{n}/\omega^2$ such
that $G_m=G_m^{(n)}$ has a component with $k$ vertices. Then for any constant $x$ we have
\[
 \Pr\left( L_1^* \le \delta(t)^{-1}\left(\log(|\eps|^3n)-\frac{5}{2}\log\log(|\eps|^3n)+c(t)+x\right)  \right) \to e^{-e^{-x}}
\]
as $n\to\infty$, where $c(t)$ is a bounded function of $t$ defined in \eqref{ctdef}, and $\delta(t)$ is as
in Theorem~\ref{RW}.
\end{lemma}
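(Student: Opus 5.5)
Set $k_x=\delta(t)^{-1}\bigl(\log(|\eps|^3n)-\frac52\log\log(|\eps|^3n)+c(t)+x\bigr)$ and $K_1=\lceil k_x\rceil$, $K_2=\floor{\sqrt{n}/\omega^2}+1$. Then the event $L_1^*\le k_x$ is exactly the event that there are no components (tree or non-tree) with between $K_1$ and $K_2-1$ vertices. Note $K_2=o(\sqrt{n}/\omega)$, so Lemma~\ref{lmain} applies with the same $\omega$ (or a slightly slower one). It then suffices to show that
\[
 \lambda(n)=\sum_{k=K_1}^{K_2-1} n\rho_k(t)/k\to e^{-x}.
\]
Once this holds, Lemma~\ref{lmain} (the case $\lambda=\Theta(1)$) gives $\Pr(X=0)=e^{-\lambda}+o(1)$ and $Y=0$ whp, so $\Pr(L_1^*\le k_x)\to e^{-e^{-x}}$. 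One small point is that $\lambda$ tends to $e^{-x}$ only up to the rounding of $k_x$. I would absorb this by defining $c(t)$ so that the limit holds along $K_1=\lceil k_x\rceil$, or more simply by noting that $\delta(t)\to0$, so $\delta(t)(K_1-k_x)=o(1)$ and rounding is harmless.

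To compute $\lambda$, I use Theorem~\ref{RW}: $\rho_k(t)/k=(1+O(1/k))\gamma(t)k^{-5/2}e^{-\delta(t)k}$. Since $\delta(\tc)=\delta'(\tc)=0$ and $\delta''(\tc)>0$, we have $\delta(t)=\Theta(\eps^2)$, so $k_x=\Theta(\eps^{-2}\log(|\eps|^3n))\to\infty$ and the $O(1/k)$ errors vanish. The sum is dominated by a range of width $O(1/\delta)\ll k_x$ above $K_1$, because $k^{-5/2}$ is essentially constant there while the exponential decays. Comparing with an integral gives
\[
 \lambda\sim n\gamma(t)K_1^{-5/2}\frac{e^{-\delta K_1}}{1-e^{-\delta}}\sim n\gamma(t)\delta^{-1}K_1^{-5/2}e^{-\delta K_1}.
\]
The upper cutoff $K_2$ contributes negligibly provided $\delta K_2\to\infty$, i.e.\ $\eps^2\sqrt n/\omega^2\to\infty$. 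This is implied by \eqref{epscond} if $\omega$ grows slowly enough; that condition gives $\eps^2\sqrt n/\log n\to\infty$. In fact it even gives $\delta K_2\gg\log n$, so the tail beyond $K_2$ is tiny.

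Next, substitute $\delta K_1=\log(|\eps|^3n)-\frac52\log\log(|\eps|^3n)+c(t)+x+o(1)$. Then $e^{-\delta K_1}=(|\eps|^3n)^{-1}(\log(|\eps|^3n))^{5/2}e^{-c(t)-x}$. Also $K_1^{-5/2}=\delta^{5/2}(\log(|\eps|^3n))^{-5/2}(1+o(1))$, because the $\log\log$ terms are lower order inside $K_1$. Hence
\[
 \lambda\sim \gamma(t)\delta(t)^{3/2}|\eps|^{-3}e^{-c(t)-x}.
\]
So we define
\[
 c(t)=\log\bigl(\gamma(t)\delta(t)^{3/2}|t-\tc|^{-3}\bigr),
\]
which is \eqref{ctdef} presumably. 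It is bounded on $I_0$ because $\delta(t)\asymp\eps^2$ and $\gamma$ is bounded above and below. With this choice, $\lambda\to e^{-x}$.

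The main obstacle is keeping the relative errors $o(1)$ uniformly as $\eps\to0$, and two errors need care. The first is replacing $K_1^{-5/2}$ by $(\log(|\eps|^3n)/\delta)^{-5/2}$: the relative error is $O(\log\log/\log)$, which is fine. The second is that rounding $K_1$ changes $e^{-\delta K_1}$ by a factor $e^{O(\delta)}=1+o(1)$. I also need $|\eps|^3n\to\infty$, which follows from \eqref{epscond}, so that $k_x\to\infty$ and $k_x<K_2$ eventually, the latter because $\log n/\eps^2\ll\sqrt n/\omega^2$. Finally, the sum-to-integral comparison must be valid when the summand varies on scale $1/\delta\to\infty$; this is routine.
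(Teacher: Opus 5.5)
Your route is the paper's route: apply Lemma~\ref{lmain} with $K_1\approx k_x$ and $K_2\approx\sqrt{n}/\omega^2$, check that the tail beyond $K_2$ is negligible, evaluate $\lambda$ via Theorem~\ref{RW}, and read off $c(t)$. The substantive problem is that you use $\delta(t)\to 0$ twice, once to replace $1-e^{-\delta}$ by $\delta$ and once to dismiss rounding. The hypotheses do not give this. The conditions $|\eps|\le\eps_0$ and \eqref{epscond} allow $\eps$ to be a fixed nonzero constant, and then $\delta(t)$ is bounded away from $0$. The paper keeps the geometric factor exactly: it writes $\lambda_K\sim n\alpha(t)K^{-5/2}e^{-\delta(t)K}$ with $\alpha(t)=\gamma(t)/(1-e^{-\delta(t)})$. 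Accordingly it defines $c(t)=\log\beta(t)$ with $\beta(t)=\gamma(t)\delta(t)^{5/2}|\eps|^{-3}/(1-e^{-\delta(t)})$. Your $c(t)=\log\bigl(\gamma\delta^{3/2}|\eps|^{-3}\bigr)$ differs from this by $\log\bigl(\delta/(1-e^{-\delta})\bigr)=O(\eps^2)$. So when $\eps\to0$ your argument is complete and equivalent to the paper's. When $\eps$ stays bounded away from $0$, however, your $c(t)$ is off by a non-vanishing constant. The fix is simply to keep $1-e^{-\delta}$ rather than $\delta$.

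On rounding: when $\delta$ is bounded below, the factor $e^{-\delta(\lceil k_x\rceil-k_x)}$ really does not tend to $1$. Your first remedy, absorbing the rounding into $c(t)$, cannot work. As $x$ ranges over an interval of length $\delta$, the quantity $\lceil k_x\rceil-k_x$ sweeps through all of $[0,1)$, so no choice of $c(t)$ repairs it for every $x$. The paper's proof silently passes over the same point, since it treats $K$ as real in $\lambda_K$. So in that regime the stated limit is only valid along integer thresholds. For $\eps\to0$, your observation $\delta(\lceil k_x\rceil-k_x)=o(1)$ disposes of the issue, as you say. A minor point: to make the event $\{L_1^*\le k_x\}$ match exactly, take $K_1=\floor{k_x}+1$ rather than $\lceil k_x\rceil$.
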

\begin{proof}
Given $K=K(n)$ let
\begin{equation}\label{laKdef}
 \lambda_K=\lambda_{K,t} = \sum_{k\ge K} n\rho_k(t)/k.
\end{equation}
By Theorem~\ref{RW}, if $\delta(t)K\to\infty$ then
\begin{equation}\label{laK}
 \lambda_K \sim n\gamma(t) \sum_{k\ge K}k^{-5/2}e^{-\delta(t) k} \sim n \alpha(t) K^{-5/2} e^{-\delta(t) K},
\end{equation}
where $\alpha(t)=\frac{\gamma(t)}{1-e^{-\delta(t)}}$. Note that, from Theorem~\ref{RW},
$\delta(t)=\Theta(\eps^2)$ and so $\alpha(t)=\Theta(\delta(t)^{-1})=\Theta(\eps^{-2})$.%

Since $\delta(t)=\Theta(\eps^2)$ and $\eps^2\sqrt{n}/\log n\to\infty$ by \eqref{epscond}, if $\omega\to\infty$ sufficiently
slowly then \eqref{laK} gives $\lambda_{\sqrt{n}/\omega^2}=o(1)$. Taking $K_1=K$ and $K_2=\sqrt{n}/\omega^2$ in Lemma~\ref{lmain},
the quantity $\lambda(n)$ appearing there is $\lambda_K-\lambda_{\sqrt{n}/\omega^2}=\lambda_K-o(1)$, so it suffices
to check that setting
\[
 K = \delta(t)^{-1}\left(\log(|\eps|^3n)-\frac{5}{2}\log\log(|\eps|^3n)+c(t)+x\right),
\]
we have $\lambda_K=e^{-x}+o(1)$; the result then follows from Lemma~\ref{lmain}.
What remains is calculation; we outline an argument.%

Firstly note that since $x$ and $c(t)$ are bounded, \eqref{epscond} implies $K=o(\sqrt{n})$.
Taking logs, from \eqref{laK} our required condition $\lambda_K=e^{-x}+o(1)$ is equivalent to
\[
 K = \frac{1}{\delta}\left( \log\left(\frac{\alpha n}{K^{5/2}}\right) +x \right),
\]
where $\alpha=\alpha(t)$ and $\delta=\delta(t)$. This implies that $K=\Theta^*(\delta^{-1})=\Theta^*(\eps^{-2})$,
where, as usual, $\Theta^*$ notation hides factors of $\log n$. (Formally, we need to check that our final $K$ satisfies
this condition, but it does.)
It then follows that $\alpha n K^{-5/2} = \Theta^*(|\eps|^3 n)= n^{\Theta(1)}$, recalling \eqref{epscond}.
This implies that $\log(\alpha n K^{-5/2}) \sim \log(|\eps|^3 n)$.
Hence $K\sim \delta^{-1}\log(|\eps|^3 n)$ and
\[
 \frac{\alpha n}{K^{5/2}} \sim \frac{\alpha \delta^{5/2}{n}}{(\log(|\eps|^3n))^{5/2}}
  = \beta(t)\frac{|\eps|^3 n}{(\log(|\eps|^3 n))^{5/2}},
\]
where
\[
 \beta(t) =\frac{\gamma(t)\delta(t)^{5/2}}{1-e^{-\delta(t)}} |\eps|^{-3} = \Theta(1).
\]
Setting
\begin{equation}\label{ctdef}
 c(t)=\log \beta(t),
\end{equation}
the result follows easily.
\end{proof}%

At this point it is not hard to complete the proof of Theorem~\ref{th_subcrit} 
using (a simplified form of) an idea of Bollob\'as~\cite{BB84}, namely
establishing a gap in the sequence of component sizes. We postpone the proof to the
next section, where we treat the sub- and super-critical cases together.%

\section{The largest component}\label{sec_supcrit}
Our aim in this section is to prove Theorems~\ref{th_subcrit} and~\ref{th_supcrit};
we shall follow almost exactly the strategy of Bollob\'as~\cite{BB84},
using various estimates proved in the previous section in place of the corresponding (much simpler) formulae
for the Erd\H os--R\'enyi random graph $G_{n,m}$ (or $G_{n,p}$). Note that these estimates are not as precise
as those for $G_{n,m}$, so we end up with weaker results than Bollob\'as~\cite{BB84} and
{\L}uczak~\cite{Luczak90} proved for $G_{n,m}$.%

The first step is to show that in the supercritical case, we have roughly the right number of vertices
in `large' components; we start by studying the `scaling limit' $\rho(t)$.
Warnke and the second author~\cite{RWsci,RW12} showed that, defining
\[
 \rho(t)=1-\sum_{k=1}^\infty \rho_k(t),
\]
for any fixed $t$ we have $L_1(G_{\lfloor tn\rfloor})/n\pto \rho(t)$. 
In~\cite[Theorem 2.5]{RW17}, they showed
that $\rho(t)$, which is equal to $0$ on $[0,\tc]$, is analytic on $[\tc,\tc+\eps_0]$.
In particular, for $0<\eps\le \eps_0$ we have
\begin{equation}\label{rhosim2}
 \rho(\tc+\eps) = \xi\eps +O(\eps^2),
\end{equation}%
establishing \eqref{rhosim1}.

Following Bollob\'as~\cite{BB84}, we estimate the number of vertices in large components by considering
small components, primarily small tree components!%

In the following lemmas, we (implicitly) assume that \eqref{epscond} holds, and that $|\eps|\le \eps_0$.
Since $\eps^2\sqrt{n}/\log n\to\infty$, we can choose $\omega=\omega(n)$ so that $\omega\to\infty$ 
(as slowly as we like) but
\begin{equation}\label{eocond}
 \frac{\eps^2 \sqrt{n}}{\omega^2 \log n} \to\infty.
\end{equation}
We choose such an $\omega$ for the quantity appearing in \eqref{gooddef}. Set
\[ 
 K=K(n)=\sqrt{n}/\omega^2,
\]
and write
\[
 S_T = \sum_{k=1}^K k T_k \hbox{\quad and\quad} S_C = \sum_{k=1}^K k C_k
\]
for the numbers of vertices in tree (respectively non-tree) components of size at most $K$.%

As usual, we write $\tS_T$ and $\tS_C$ for the corresponding `cooked' random variables.
\begin{lemma}\label{esmall}
Let $m=m(n)$ and set $t=t(n)=m/n$ and $\eps=\eps(n)=t-\tc$.
Suppose that $0<\eps\le \eps_0$, and that \eqref{eocond} holds. Then
\begin{equation}\label{eST}
 \E[\tS_T] =  (1-\rho(t))n + O(\omega\eps^{-1}\sqrt{n})
\end{equation}
and
\begin{equation}\label{eSC}
 \E[\tS_C] = O(\eps^{-3}) = o(\eps^{-1}\sqrt{n}).
\end{equation}
\end{lemma}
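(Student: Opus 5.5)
We write $\E[\tS_T]=\sum_{k\le K} k\,\E\tT_k$ and apply Lemma~\ref{lsofar} term by term. This gives $\E\tT_k = n\mu_{k,t}^0\exp\bigl(O(k\omega n^{-1/2})\bigr)$, and since $k\omega n^{-1/2}\le K\omega/\sqrt n=1/\omega=o(1)$ we have
\[
 k\E\tT_k = n\rho_k(t)\bigl(1+O(k\omega/\sqrt n)\bigr).
\]
Summing over $k\le K$, the main term is $n\sum_{k\le K}\rho_k(t)$ and the error is $O\bigl(\omega\sqrt n\sum_{k}k\rho_k(t)\bigr)$.

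For the main term we use $\rho(t)=1-\sum_{k\ge1}\rho_k(t)$, so it equals $(1-\rho(t))n - n\sum_{k>K}\rho_k(t)$. By Theorem~\ref{RW} the tail is $O\bigl(n\sum_{k>K}k^{-3/2}e^{-\delta k}\bigr)=O(nK^{-3/2}\delta^{-1}e^{-\delta K})$. Since $\delta(t)=\Theta(\eps^2)$ and \eqref{eocond} gives $\delta K=\Theta(\eps^2\sqrt n/\omega^2)\gg\log n$, we get $e^{-\delta K}=n^{-\omega'}$ with $\omega'\to\infty$. The tail is therefore $o(1)$, which is far below the target error.

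For the error term, Theorem~\ref{RW} gives
\[
 \sum_k k\rho_k(t)=O\Bigl(\sum_k k^{-1/2}e^{-\delta k}\Bigr)=O(\delta^{-1/2})=O(\eps^{-1}),
\]
by comparison with $\int_0^\infty x^{-1/2}e^{-\delta x}\,\dx$. Hence the error is $O(\omega\eps^{-1}\sqrt n)$, proving \eqref{eST}. The one point needing care is uniformity: the $\approx$ constants in Lemma~\ref{lsofar} (via Lemma~\ref{mkclose} and \eqref{PtE}) must be uniform over all $k\le K$. Inspecting the proofs, the implicit constants depend only on $C$ and the rule, so this holds.

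For \eqref{eSC} we use \eqref{non-tree} in its quantitative form
\[
 \E\sum_{r\ge1}\tC_{k,r}=O\bigl((k^2/n)\,n\mu_k^+\bigr)=O(k^2\mu_k^+).
\]
By Lemma~\ref{mkclose} (applied to $\mu^+$, as in the proof of Lemma~\ref{lsofar}) we have $\mu_k^+=O(\mu_{k,t}^0)=O(\rho_k(t)/k)$. So $k\,\E\tC_k=O(k^2\rho_k(t))=O(k^{1/2}e^{-\delta k})$, and summing over $k$ gives $O(\delta^{-3/2})=O(\eps^{-3})$. Finally $\eps^{-3}=o(\eps^{-1}\sqrt n)$ is equivalent to $\eps^2\sqrt n\to\infty$, which follows from \eqref{eocond}.

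I expect the main obstacle to be confirming that Lemma~\ref{mkclose} and the non-tree bound hold uniformly in $k$ up to $K$ with a fixed $\omega$, rather than only for a single sequence $k(n)$. The arguments are inductive with fixed constants, so this should go through. The remaining steps are elementary sums against the asymptotics of Theorem~\ref{RW}.
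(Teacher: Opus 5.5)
Your proposal is correct and follows the paper's proof essentially step for step. You apply Lemma~\ref{lsofar} termwise to get $k\E\tT_k=n\rho_k(t)(1+O(k\omega/\sqrt n))$, and you make the tail $n\sum_{k>K}\rho_k(t)$ negligible via Theorem~\ref{RW} and \eqref{eocond}. You bound the error by $\omega\sqrt n\sum_k k\rho_k(t)=O(\omega\sqrt n\,\delta^{-1/2})$ and get $\E[\tS_C]=O(\sum_k k^2\rho_k(t))=O(\delta^{-3/2})$. Your point about uniformity in $k\le K$ is one the paper uses implicitly; it is supplied by the induction over all $1\le k\le\kmax$ in the proof of Lemma~\ref{mkclose}.
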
%
Note that \eqref{eocond} certainly implies that $\omega=o(\eps^2\sqrt{n})$, so 
the $O(\omega\eps^{-1}\sqrt{n})$ error term here is $o(\eps n)$.
\begin{proof}
By Theorem~\ref{RW}, we have
\begin{equation}\label{rhoK}
 \sum_{k>K} \rho_k(t) \sim \gamma(t) \sum_{k>K}k^{-3/2}e^{-\delta(t) k} =O\left(\delta(t)^{-1} K^{-3/2} e^{-\delta(t) K}\right) = o(n^{-100}),
\end{equation}
recalling that $\delta(t)=\Theta(\eps^2)$ and so, by \eqref{eocond}, $\delta(t)K/\log n\to\infty$.
Recalling the notational convention \eqref{appconv}, by Lemma~\ref{lsofar}, for $k\le K$ we have
\[
 \E[k\tT_k] = n\rho_k(t)(1+O(k\omega/\sqrt{n})).
\]
Summing, we see that
\begin{equation}\label{EST}
 \E[\tS_T] = n\sum_{k=1}^K \rho_k(t) + O(E) = n\left(1-\rho(t)-\sum_{k>K}\rho_k(t)\right) +O(E),
\end{equation}
where the error term is
\begin{equation}\label{Edef}
 E = \omega\sqrt{n} \sum_{k=1}^K k\rho_k(t).
\end{equation}
Now, by Theorem~\ref{RW} and the elementary estimate $\sum_{k=1}^\infty k^{-1/2}e^{-ak}=O(a^{-1/2})$,
\begin{equation}\label{Ebound}
 E = O\left(\omega\sqrt{n} \sum_{k=1}^K k^{-1/2} e^{-\delta(t) k}\right)
 = O\left( \omega\sqrt{n}\delta(t)^{-1/2} \right) = O(\omega \sqrt{n} \eps^{-1}).
\end{equation}
This, together with \eqref{EST} and \eqref{rhoK}, implies \eqref{eST}.
The argument for \eqref{eSC} is similar but simpler. 
From Lemma~\ref{lsofar} we have $\E[\tC_k] = O(k\rho_k(t))$, so
\[
 \E[\tS_C] = O\left(\sum_{k=1}^K k^2\rho_k(t) \right).
\]
Using Theorem~\ref{RW} and the elementary estimate $\sum_{k=1}^\infty k^{1/2}e^{-ak}=O(a^{-3/2})$,
this gives $\E[\tS_C]=O(\delta(t)^{-3/2})=O(\eps^{-3})$, and \eqref{eSC} follows
using \eqref{eocond} to compare the two error terms.
\end{proof}
Following Bollob\'as~\cite{BB84}, we establish concentration of $S_T+S_C$, and hence
of $N_{>K}(G_m^{(n)})=n-S_T-S_C$, by considering the second moment of $S_T$.
Unfortunately, to get a useful bound on the variance we need a stronger assumption on $\eps$ than
is needed elsewhere in the argument, namely that $\eps^6 n\to\infty$.
This certainly implies \eqref{epscond}. Note that, under this assumption,
we can choose $\omega\to\infty$ such that \eqref{eocond} and
\begin{equation}\label{second}
 \omega^2 = o(\eps^6 n)
\end{equation}
hold.
\begin{lemma}\label{conc}
Let $m=m(n)$, and set $t=t(n)=m/n$ and $\eps=\eps(n)=t-\tc$.
Suppose that $0<\eps\le \eps_0$, that $\eps^6n\to\infty$, and that
\eqref{eocond} and \eqref{second} hold. Then
\[
 N_{>K}(G_m^{(n)}) = \rho(t)n + \Op(\omega^{1/2}\eps^{-1/2}n^{3/4}).
\]
\end{lemma}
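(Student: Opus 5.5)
Since $N_{>K}(G_m)=n-S_T-S_C$, it suffices to show that $S_T+S_C=(1-\rho(t))n+\Op(\omega^{1/2}\eps^{-1/2}n^{3/4})$. By Lemma~\ref{iconc} the event $\good$ holds whp, and on $\good$ we have $S_T=\tS_T$ and $S_C=\tS_C$, so we may work throughout with the cooked variables. For $\tS_C$ we use Lemma~\ref{esmall} and Markov's inequality: $\tS_C=\Op(\eps^{-3})$. The condition $\eps^6n\to\infty$ gives $\eps^{-3}=o(n^{1/2})=o(\eps^{-1/2}n^{3/4})$, so this term is negligible. The mean of $\tS_T$ is given by \eqref{eST}, with error $O(\omega\eps^{-1}\sqrt n)$. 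Using \eqref{second}, $\omega\eps^{-1}\sqrt n=\omega^{1/2}\eps^{-1/2}n^{3/4}\cdot(\omega/(\eps n^{1/2}))^{1/2}$, and $\omega^2=o(\eps^6 n)\le o(\eps^2n)$, so this is within the target. The remaining task is to show $\Var(\tS_T)=O(\omega\eps^{-1}n^{3/2})$ and then apply Chebyshev's inequality.

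For the variance, write
\[
\E[\tS_T^2]=\sum_{k\le K}k^2\E\tT_k+\sum_{k_1,k_2\le K}k_1k_2\E\tT_{k_1,k_2}.
\]
The diagonal term is $\sum_k k^2\E\tT_k\approx n\sum_{k\le K}k\rho_k(t)=O(n\eps^{-1})$, using the estimate $\sum k^{-1/2}e^{-\delta k}=O(\delta^{-1/2})$ from the proof of Lemma~\ref{esmall}. This is small. For the cross term we use \eqref{EtT12}, but we must track the error in $\approx$ explicitly. That error is a factor $\exp(O((k_1+k_2)\omega/\sqrt n))$, and it multiplies $n^2\mu_{k_1}\mu_{k_2}$ with $\mu_{k_i}=(1+O(k_i\omega/\sqrt n))\rho_{k_i}/k_i$ by Lemma~\ref{mkclose}. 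Since $(k_1+k_2)\omega/\sqrt n\le 2/\omega=o(1)$ for $k_i\le K$, the exponential is $1+O((k_1+k_2)\omega/\sqrt n)$. Thus
\[
\sum k_1k_2\E\tT_{k_1,k_2}=n^2\Bigl(\sum_{k\le K}\rho_k\Bigr)^2+O\Bigl(\omega n^{3/2}\sum_{k_1}\rho_{k_1}\sum_{k_2}k_2\rho_{k_2}\Bigr).
\]
The error term is $O(\omega n^{3/2}\eps^{-1})$, again by the bound from \eqref{Ebound}.

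The main obstacle is that $(\E\tS_T)^2$ must match $n^2(\sum\rho_k)^2$ to the same precision. Two first-order errors are involved: the error $O(E)$ in $\E\tS_T$ and the $O(\cdot)$ term in the cross sum. Taken as independent bounds, these could cancel poorly and leave a variance bound of only $O(n\cdot\omega\eps^{-1}\sqrt n)=O(\omega\eps^{-1}n^{3/2})$. Happily, that is exactly the bound we need: $\Var(\tS_T)\le|\E[\tS_T^2]-n^2(\sum\rho_k)^2|+|(\E\tS_T)^2-n^2(\sum\rho_k)^2|$. Both terms are $O(n\cdot\omega\eps^{-1}\sqrt n)$ plus the diagonal contribution, using $\E\tS_T=n\sum_{k\le K}\rho_k+O(E)$ with $E=O(\omega\eps^{-1}\sqrt n)$ and $n\sum\rho_k\le n$.

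So we do not need any cancellation between the multiplicative errors, only crude absolute bounds. The care required is in checking that the $\approx$ errors in \eqref{EtT12} really are multiplicative in the form $1+O(k\omega/\sqrt n)$ uniformly over the double sum, and are not merely $\sim$. This follows from \eqref{PtE}, the multiplicativity \eqref{muHpmult} applied to $\mu^\pm$, and Lemma~\ref{mkclose}. Chebyshev's inequality then gives $\tS_T=\E\tS_T+\Op(\omega^{1/2}\eps^{-1/2}n^{3/4})$. Combining this with \eqref{eST} (where \eqref{rhoK} absorbs the tail $\sum_{k>K}\rho_k$) and with the estimate for $\tS_C$ completes the proof.
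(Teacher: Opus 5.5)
Your proposal is correct and follows essentially the same route as the paper. Both arguments take the second moment of $\tS_T$ via \eqref{EtT12} with multiplicative errors $1+O((k_1+k_2)\omega/\sqrt{n})$, bound the variance by $O(nE)=O(\omega\eps^{-1}n^{3/2})$, apply Chebyshev, handle $\tS_C$ by Markov, and transfer to the uncooked variables on $\good$. Your explicit diagonal term $\sum_k k^2\E\tT_k=O(n\eps^{-1})$ is slightly more accurate than the paper's identity for $\E[\tS_T^2-\tS_T]$, which in effect treats the diagonal as $\sum_k k\tT_k$; either way it is absorbed into $O(nE)$.
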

\begin{proof}
%
Let $T_{k_1,k_2}$ denote the number of ordered pairs $(T_1,T_2)$ of distinct tree components of $G_m$
where $T_i$ has $k_i$ vertices, and $\tT_{k_1,k_2}$ the corresponding cooked variable.
From \eqref{EtT12} and the fact that $k\mu_k^0=\rho_k(t)$, for $1\le k_1,k_2\le K$ we have
\[
 \E[k_1k_2\tT_{k_1,k_2}] = \bb{1+O((k_1+k_2)\omega/\sqrt{n})} n^2 \rho_{k_1}(t) \rho_{k_2}(t).
\]
Hence, using symmetry to absorb the $k_2$ term in the $O(\cdot)$ error term above into the $k_1$ term,
and defining $E$ as in \eqref{Edef}, we have 
\begin{eqnarray*}
 \E[\tS_T^2-\tS_T] &=& \sum_{k_1=1}^K\sum_{k_2=1}^K \E[ k_1k_2\tT_{k_1,k_2}] \\
 &=& n^2\left(\sum_{k_1=1}^K \rho_k(t)\right)^2 + O(\omega n^{3/2}) \left(\sum_{k_1=1}^K k_1\rho_{k_1}(t)\right)
 \sum_{k_2=1}^K \rho_{k_2}(t) \\
 &=& (\E[\tS_T]+O(E))^2 + O(nE) \\
 &=& \E[\tS_T]^2 + O(nE),
\end{eqnarray*}
recalling \eqref{EST}, noting that $\sum_{k=1}^K\rho_k(t)\le 1-\rho(t)\le 1$,
and using the facts that $\E[\tS_T]=O(n)$ and $E=o(\eps n)=O(n)$.
Using once again that $\E[\tS_T]=O(n)$, it follows that
\begin{eqnarray*}
 \Var[\tS_T] &=& \E[\tS_T^2-\tS_T] + \E[\tS_T] - \E[\tS_T]^2 \\
 &=& O(nE+n) = O(\omega n^{3/2}\eps^{-1}), 
\end{eqnarray*}
recalling \eqref{Ebound}.
By Chebyshev's inequality we thus have $\tS_T=\E[\tS_T]+\Op(\omega^{1/2}\eps^{-1/2}n^{3/4})$.
From Lemma~\ref{esmall} we have $\E[\tS_T]=(1-\rho(t))n+O(\omega\eps^{-1}\sqrt{n})$ and
$\E[\tS_C]=o(\eps^{-1}\sqrt{n})$. The latter implies (by Markov's inequality)
that $\tS_C=\op(\omega\eps^{-1}\sqrt{n})$. Since, by \eqref{eocond}, $\omega\eps^{-1}\sqrt{n}=o(\omega^{1/2}\eps^{-1/2}n^{3/4})$, it follows that
\[
 n- \tS_T-\tS_C = \rho(t) n+ \Op(\omega^{1/2}\eps^{-1/2}n^{3/4}).
\]
It remains only to note that $N_{>K}(G_m)=n-S_T-S_C$, and that
on the `good' event $\good$, which holds whp, we have $\tS_T=S_T$ and $\tS_C=S_C$.
\end{proof}
The next step follows another idea from Bollob\'as~\cite{BB84}, establishing a gap
in the sequence of component sizes. Here we have more room in the calculations
than in~\cite{BB84}, since we are further from the critical window. In this lemma
the conditions on $\eps_1$ and $\omega$ correspond exactly to \eqref{epscond}
and \eqref{eocond}.
\begin{lemma}\label{lgap}
Let $\eps_1=\eps_1(n)>0$ and $\omega=\omega(n)$ satisfy $0<\eps_1\le \eps_0$, $\eps_1n^{1/4}(\log n)^{-1/2}\to\infty$
and $\omega^2=o(\eps_1^2\sqrt{n}/\log n)$. Set $K=K(n)=\sqrt{n}/\omega^2$.
Then whp the process $(G^{(n)}_m)$ is such that, for every $0\le m\le (\tc-\eps_1)n$
and every $(\tc+\eps_1)n\le m\le (\tc+\eps_0)n$, the graph $G_m=G^{(n)}_m$ has no component
with between $K$ and $3K$ vertices.
\end{lemma}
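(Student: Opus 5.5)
We need a union bound over all $m$ in the allowed ranges, which number $O(n)$. So for each fixed $m$ we want the probability that $G_m$ has a component with between $K$ and $3K$ vertices, intersected with $\good$, to be $o(1/n)$. The plan is to apply \eqref{Ybd} and the first bound of Lemma~\ref{lmain}. These give, for each $m$ in the range, with $K_1=K$ and $K_2=3K+1=o(\sqrt{n}/\omega')$,
\[
 \Pr\bigl(\{G_m \hbox{ has a component of size in } [K,3K]\}\cap\good\bigr)\le (1+o(1))\lambda_m,
 \qquad \lambda_m=\sum_{k=K}^{3K} n\rho_k(m/n)/k .
\]
Here $\omega'$ is the slowly growing function used to define $\good$ in \eqref{gooddef}; we may take it much smaller than $\omega$. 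One point needs checking: the bounds in Lemma~\ref{lmain} are uniform over $m$, since the constants come from Lemma~\ref{iconc} (with a fixed $C$) and Theorem~\ref{RW}, which are uniform in $t\in I_0$. For $m\le(\tc-\eps_0)n$, which lies outside $I_0$, I would use the monotonicity fact that the number of vertices in components of size $\ge K$ only grows with $m$. The cleaner route is to note that a component of size in $[K,3K]$ at time $m$ persists or merges, so some component of size at least $K$ exists at every later time. That alone does not give the gap, though. It is simpler to extend Theorem~\ref{RW}-type decay to $t\le\tc-\eps_0$: there $\rho_k(t)\le e^{-ck}$ uniformly by the subcritical exponential tail (from \cite{RW17} or \cite{SW07}), and the same first-moment bound applies, since Lemma~\ref{lmain} only needs $\mu^0_{k,t}=\rho_k(t)/k$ and the induction in Lemma~\ref{mkclose} goes through with an exponential upper bound.

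The core computation is bounding $\lambda_m$. By Theorem~\ref{RW}, $\rho_k(t)=O(k^{-3/2}e^{-\delta(t)k})$, so $\lambda_m=O(nK^{-3/2}e^{-\delta(t)K})$. For $|t-\tc|\ge\eps_1$ we have $\delta(t)\ge c\eps_1^2$, because $\delta$ is analytic with $\delta(\tc)=\delta'(\tc)=0<\delta''(\tc)$. Shrinking $\eps_0$ if needed, $\delta(t)\ge c(t-\tc)^2$ on $I_0$. Hence $\delta(t)K\ge c\eps_1^2\sqrt{n}/\omega^2$. The hypothesis $\omega^2=o(\eps_1^2\sqrt{n}/\log n)$ gives $\delta(t)K/\log n\to\infty$, so $e^{-\delta(t)K}=o(n^{-A})$ for every fixed $A$, and thus $\lambda_m=o(n^{-2})$ uniformly in the allowed $m$.

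Summing over the at most $(\tc+\eps_0)n$ values of $m$ gives total probability $o(1/n)$ on $\good$. Since $\Pr(\good)\to1$ by Lemma~\ref{iconc}, the lemma follows.

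The main obstacle is the uniformity in $m$ of the estimates from the previous section, and the small-$t$ range where Theorem~\ref{RW} does not apply. For the latter I would first try to handle $t\le\tc-\eps_0$ by a crude domination: each step adds an edge chosen with probability at most $4n^{-2}$ per pair (by \eqref{pisol}), which gives $\mu_T^+$ bounds by an Erd\H os--R\'enyi-like quantity with edge density below its critical value. If the constants do not work out, I would instead use the super-exponential tail from \cite{RW17}.
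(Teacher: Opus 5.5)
Your treatment of the near-critical range ($m\in nI_0$ with $|m/n-\tc|\ge\eps_1$) is the paper's argument. You apply Lemma~\ref{lmain} with $K_1=K$ and $K_2\approx 3K$, bound $\lambda_{K,t}=O(nK^{-3/2}e^{-\delta(t)K})$, which is super-polynomially small since $\delta(t)K/\log n\to\infty$, and take a union bound over $O(n)$ values of $m$.

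The gap is the early range $0\le m\le(\tc-\eps_0)n$. It lies outside $I_0$, and your proposal does not actually cover it. Your main suggestion is that the induction of Lemma~\ref{mkclose} ``goes through with an exponential upper bound'' $\rho_k(t)\le Ae^{-ck}$. It does not, at least as stated. That induction uses the $k^{-5/2}$ prefactor in \eqref{ktheta}, so that $\sum_{k_1+k_2=k}k_1k_2\mu^0_{k_1}\mu^0_{k_2}$ is dominated by its end terms and is $O(k^{-3/2}e^{-\delta k})$. It also uses the matching lower bound on $\mu^0_k$ to convert this into $O(k\eta\mu^0_k)$. With only $\mu^0_k\le Ae^{-ck}/k$, the convolution is of order $ke^{-ck}$. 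The hypothesis $\mu^+_j\le 2Ae^{-cj}/j$ then gives only $\mu^+_k\le (Ae^{-ck}/k)(1+O(\eta k^2))$. So it propagates only while $\eta k^2=O(1)$, i.e.\ for $k$ up to order $n^{1/4}$, far below $K=\sqrt n/\omega^2$.

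Your fallback domination also fails. A per-step bound of $4n^{-2}$ on each edge compares $G_{tn}$ with an Erd\H os--R\'enyi graph of average degree about $4t$. That graph is supercritical once $t>1/4$. The Bohman--Frieze rule delays the giant, so in this normalization $\tc>1/2$, and the comparison says nothing for $t$ near $\tc-\eps_0$. Appealing to a stronger tail for $\rho_k$ does not help either: the difficulty is transferring information from the differential equations to $\mu^+_k$ for $k$ as large as $K$.

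The missing ingredient is the monotonicity argument you set aside, combined with one external input. The paper uses \cite[Theorem 2.8]{RW17}: whp $G_{(\tc-\eps_0)n}$ has no component with more than $A\log n$ vertices. Every $G_m$ with $m\le(\tc-\eps_0)n$ is a subgraph of $G_{(\tc-\eps_0)n}$, so whp none of them has a component with more than $A\log n$ vertices. Moreover $A\log n<K$, because $\omega^2=o(\eps_1^2\sqrt n/\log n)$ gives $K\gg\eps_1^{-2}\log n$.

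So monotonicity alone does not give the gap, but monotonicity plus this single-time bound disposes of the entire early range at once. There is no need to extend the tree-counting estimates below $I_0$. With this substituted for your early-range discussion, the rest of your argument goes through.
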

\begin{proof}
By Theorem 2.8 of~\cite{RW17}, there is a constant $A$ such that whp
the graph $G_{(\tc-\eps_0)n}$ 
has no component with more than $A\log n$ vertices.
Hence whp none of the graphs $G_m$, $0\le m\le (\tc-\eps_0)n$,
(all subgraphs of $G_{(\tc-\eps_0)n}$)
has a component of size more than $A\log n\le K$. 
Suppose then that $m=m(n)$ satisfies the given conditions and, as usual,
define $t=t(n)=m/n$ and $\eps=\eps(n)=t-\tc$. Thus, by assumption, $|\eps|\ge \eps_1$.
Define $\lambda_{K,t}$ as in \eqref{laKdef}, ignoring the irrelevant rounding to integers.
Since $|\eps|\ge\eps_1$, our assumption on $\omega$ implies that \eqref{eocond} holds.
From \eqref{laK} and the fact that $\delta(t)=\Theta(\eps^2)$,
we see (considering only the $e^{-\delta(t)K}$ term), that
\begin{equation}\label{laK1}
 \lambda_{K,t}=o(n^{-100}),
\end{equation}
say. Recall the definition \eqref{gooddef} of the `good' event $\good=\good_{\le m}$. Note
that for $m\le m^+=(\tc+\eps_0)n$, if $\good_{\le m^+}$ holds, then so does $\good_{\le m}$.
Let $Z_m=X_m+Y_m$ denote the total number of components (tree plus non-tree)
of $G_m$ with between $K$ and $3K$ vertices. Since $3K=o(\sqrt{n}/\omega)$, by
the first two parts of Lemma~\ref{lmain} we have
\begin{eqnarray*}
 \Pr( \{Z_m>0\} \cap \good_{\le m^+}) &\le&  \Pr( \{Z_{m}>0\} \cap \good_{\le m})  \\
 &\le& (1+o(1))(\lambda_{K,t}-\lambda_{3K,t}) \\
 &\le& (1+o(1)) \lambda_{K,t} = o(n^{-100}).
\end{eqnarray*}
Recalling that $\Pr(\good_{\le m^+})\to 1$ and taking the union bound, the result follows.
\end{proof}
We are now ready to complete the proofs of our main results.
\begin{proof}[Proof of Theorem~\ref{th_subcrit}]
Let $m=m(n)$. Set $t=t(n)=m/n$ and $\eps=\eps(n)=t-\tc$, and suppose
that $-\eps_0\le \eps<0$ and that \eqref{epscond} holds.
Choose $\omega=\omega(n)\to\infty$ such that \eqref{eocond} holds,
and set $K=\sqrt{n}/\omega^2$. By Lemma~\ref{lgap}, whp no $G_{m'}$, $m'\le m$,
has a component of size between $K$ and $3K$. Since adding one edge to a graph
cannot more than double the size of the largest component, and $G_0$ consists of isolated vertices,
it follows that whp $G_m$ has no component with more than $K$ vertices.
The result now follows from Lemma~\ref{lsmall}.
\end{proof}
\begin{proof}[Proof of Theorem~\ref{th_supcrit}]
Once again we follow the strategy of Bollob\'as~\cite{BB84}.
Let $m=m(n)$ be such that, setting $t=t(n)=m/n$ and $\eps=\eps(n)=t-\tc$, we
have $0<\eps\le \eps_0$ and $\eps^6n\to\infty$.
As noted above, this implies that \eqref{epscond} holds,
and we may choose $\omega=\omega(n)\to\infty$ such that \eqref{eocond} and \eqref{second}
hold, and $\omega\le \log n$, say. 
Set $\hat\eps=\eps/2$, $\hat{t}=(\tc+t)/2=\tc+\hat\eps$, and $\hat{m}=\floor{\hat{t} n}$.
Of course,
\eqref{eocond} and \eqref{second} hold with $\hat\eps$ in place of $\eps$.
Let $K=K(n)=\sqrt{n}/\omega^2$, and
call a component of some $G_{m'}$ \emph{small} if it has fewer than $K$ vertices,
\emph{medium} if it has between $K$ and $3K$ vertices, and \emph{large} if it has
more than $3K$ vertices.
By Lemma~\ref{lgap}, whp the process $(G_i)_{i\ge 0}$ is such that for every $\hat{m}\le m'\le m$ there
are no medium components. Since two small components cannot unite to form a large
component, it follows that whp no new large components are created during
this part of the process.

\begin{claim}\label{Claim}
Whp $G_m$ has only one large component.
\end{claim}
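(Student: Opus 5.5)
The plan is Bollobás's `sprinkling' argument. We use the steps between $\hat m$ and $m$ to merge all the large components that already exist at time $\hat m$. We condition on $G_{\hat m}$ and work on the whp event of Lemma~\ref{lgap} that there are no medium components at any time $m'\in[\hat m,m]$. On that event the following holds. Let $L$ be the set of vertices lying in large components of $G_{\hat m}$. Every large component of $G_{m'}$ for $\hat m\le m'\le m$ is then a union of large components of $G_{\hat m}$, possibly together with small ones, because no new large component can be created. So it suffices to show that whp, by time $m$, every pair of large components of $G_{\hat m}$ has been joined.

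First, Lemma~\ref{conc} applied at time $\hat m$ (with $\hat\eps$) gives
\[
 |L| = N_{>3K}(G_{\hat m})\ge N_{>K}(G_{\hat m})-O(\text{medium}) = \rho(\hat t)n+\Op(\omega^{1/2}\hat\eps^{-1/2}n^{3/4}).
\]
Here the medium count is $0$ whp by Lemma~\ref{lgap}. By \eqref{rhosim2} and $\eps^6n\to\infty$, whp $|L|\ge c_0\eps n$ for some constant $c_0>0$.

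Suppose that at time $m$ there are still at least two large components. Then there is a partition $L=X\cup Y$ into unions of large components of $G_{\hat m}$ such that no edge $e_i$, $\hat m<i\le m$, joins $X$ to $Y$. Take $|X|=s\le|Y|$, so $s\ge 3K$ and $|Y|\ge c_0\eps n/2$. Since $L$ has at most $n/(3K)$ components, the number of such $X$ with $|X|\le s$ is at most $\binom{n/(3K)}{s/(3K)}\le (en/s)^{s/(3K)}\le \exp\bb{(s/K)\log n}$.

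For a fixed partition, we bound the probability of no crossing edge as follows. By \eqref{pnisol}, on $\good$ the conditional probability at each step $i$ that $e_i$ joins $X$ and $Y$ is at least $2n^{-2}(1-\rho_1(i/n)^2-o(1))|X||Y|$, whatever the history. Also $1-\rho_1(t)^2$ is bounded below by a positive constant for $t$ near $\tc$. Using the stopping/cooking device of Lemma~\ref{lcond} (or simply multiplying conditional probabilities up to the first failure of $\good$), the probability of no crossing edge over the $\ge \eps n/2-1$ steps is therefore at most
\[
 \exp\bb{-c\,\eps\, s|Y|/n}\le \exp(-c'\eps^2 s),
\]
plus $\Pr(\good^\cc)$, which is added only once overall.

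Summing over $s\ge 3K$ and over the choices of $X$ gives a total of at most
\[
 \sum_{s\ge 3K}\exp\bb{(s/K)\log n-c'\eps^2 s}.
\]
By \eqref{eocond}, $\eps^2K/\log n\to\infty$, so each exponent is at most $-c''\eps^2 s\le -c''\eps^2K$, and the sum is $o(1)$. Hence whp $G_m$ has exactly one large component; that there is at least one follows from $|L|>0$.

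The main obstacle is the counting step. The naive bound of $2^{n/(3K)}$ partitions is far too large, since $\eps^2K\ll n/K$. The point is to parametrise by the size $s$ of the smaller side, so that the entropy $(s/K)\log n$ is beaten by the exponent $\eps^2 s$ exactly under \eqref{eocond}. A lesser issue is handling the dependence between steps, which is done via the history-uniform lower bound on $\good$.
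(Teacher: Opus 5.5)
Your argument is correct, but it is organised differently from the paper's. The paper does not count cuts. It splits $[\hat m,m]$ into $\log n$ subintervals of length $\eps n/(2\log n)$ and shows that in each subinterval, whp \emph{every} large component $C$ of $G_{m_{i-1}}$ is joined to the union $R$ of the other large components. For this it uses only the crude bound $|C||R|\ge KL$. The union bound is then over at most $n$ components and $\log n$ rounds, and the number of large components at least halves in each round. The price is that each round gets only a $1/\log n$ fraction of the available time. So the per-round exponent $\Omega(\eps^2 n^{1/2}\omega^{-2}/\log n)$ must beat $\log n$, which is why the paper invokes $\eps^6 n\to\infty$ and $\omega\le\log n$ at this point.

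Your one-shot union bound over cuts, indexed by the size $s$ of the smaller side, uses the whole interval at once. It trades the $\log n$ rounds for an entropy term $(s/K)\log n$, which is beaten by $\eps^2 s$ exactly under \eqref{eocond}. So this step needs only $\eps^2K/\log n\to\infty$, although $\eps^6n\to\infty$ is still needed, via Lemma~\ref{conc}, to get $|L|=\Omega(\eps n)$. The paper's version has simpler bookkeeping, while yours is slightly more economical in hypotheses and treats the dependence between steps more explicitly.

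Two minor points:
\begin{itemize}
\item The number of admissible $X$ is $\sum_{i\le s/(3K)}\binom{n/(3K)}{i}$ rather than a single binomial coefficient. Your final bound $(en/s)^{s/(3K)}$ is still valid for this sum.
\item You can dispense with the cooking device. Since $I_i$ is nonincreasing, once $I_{\hat m}/n\le\rho_1(\hat t)+o(1)$ (which holds whp), the lower bound $\Omega(|X||Y|n^{-2})$ on the crossing probability holds at every later step for every history. The product bound then follows directly.
\end{itemize}
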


Before proving this, let us see that Theorem~\ref{th_supcrit} follows. 
By Lemma~\ref{conc}, the number $N_{>3K}(G_m)$ of vertices of $G_m$ in large components
is $\rho(\tc+\eps) n+\Op(\omega^{1/2}\eps^{-1/2}n^{3/4})$.
In the light of Lemma~\ref{lsmall}, given the claim this establishes
Theorem~\ref{th_supcrit}, but with an extra factor $\omega^{1/2}$ in the error term in \eqref{newL1bd}.
We can remove this factor by observing that $\omega(n)$ does not appear in the statement
of Theorem~\ref{th_supcrit}, and can be taken to tend to infinity as slowly as we like.

It remains to prove Claim~\ref{Claim}.
For this, it is enough to condition on $G_{\hat{m}}$, and show that whp all large
components of $G_{\hat{m}}$ are contained in a single component of $G_m$.

By Lemma~\ref{conc} (now applied with $\hat\eps$ in place of $\eps$), 
and \eqref{rhosim1}, whp $G_{\hat{m}}$
has at least, say,
\[
 L = \rho(\hat{t}) n/2=\Omega(\hat\eps n)=\Omega(\eps n)
\]
vertices in large components. Let us condition
on $G_{\hat{m}}$, assuming that this holds.

For $i=0,\ldots,\log n$, set $m_i=\hat{m}+i(\eps n)/(2\log n)$, noting that $m_{\log n}=m$. (As usual,
we ignore the rounding to integers.)
Let $B_i$ be the event that the following hold: (i) $G_{m_{i-1}}$ has at least two large components,
and (ii) $G_{m_{i-1}}$ has a large component that is not connected in $G_{m_i}$ to some other
large component of $G_{m_{i-1}}$. We shall show in a moment that $\Pr(B_i)=o(1/\log n)$.
Assuming this, then whp none of $B_1,\ldots,B_{\log n}$ holds. Recalling that whp
no new large components are created during this part of the process,
it follows that whp each $G_{m_i}$ has either only a single large component,
or at most half as many large components as $G_{m_{i-1}}$. Since $G_{m_0}$ has
at most $n$ large components, it follows that whp $G_m$ has a single large component, as required.
It remains only to bound $\Pr(B_i)$. Recall that we are conditioning on $G_{m_0}$, and assuming
that it has at least $L=\Omega(\eps n)$ vertices in large components. Hence $G_{m_{i-1}}$ (a supergraph
of $G_{m_0}$) has at least $L$ vertices in large components. If $G_{m_{i-1}}$ has only one large
component, there is nothing to prove. Otherwise, consider some large component $C$ of $G_{m_{i-1}}$,
and let $R$ be the union of the other large components. Note that $|C|$, $|R|\ge 3K$,
and $|C|+|R|\ge L$, so $|C||R|\ge KL$.
There is a set $S$ of
$|C||R| \ge KL = \Omega(n^{1/2}\omega^{-2}\eps n) = \Omega(\eps n^{3/2}\omega^{-2})$ potential edges
$e$ such that the addition of any edge $e\in S$ would join $C$ to $R$. In any given step,
the conditional probability of adding some such edge is $\Omega(|S| n^{-2})$, say from
\eqref{pisol} and \eqref{pnisol}. Hence the probability that no such edge is added between
steps $m_{i-1}$ and $m_i$ is at most
\begin{equation}\label{nojoin}
 \bb{1-\Omega(|S| n^{-2})}^{m_i-m_{i-1}} \le \exp\bb{-\Omega(|S| n^{-2} (m_i-m_{i-1}))}.
\end{equation}
Now
\[
 |S|n^{-2}(m_i-m_{i-1}) =\Omega\bb{ \eps n^{3/2}\omega^{-2} n^{-2} \eps n/ \log n}
 = \Omega\bb{ \eps^2 n^{1/2} \omega^{-2}/\log n }.
\]
Recall that $\eps^6n\to\infty$, so $\eps^2n^{1/2} = (\eps^6n)^{1/3}n^{1/6}$ grows at
least as fast as $n^{1/6}$. Since $\omega\le\log n$, the final bound above grows
much faster than $\log n$.
Hence the probability \eqref{nojoin} is $o(n^{-100})$, say. Taking a union bound over the large components $C$
of $G_{m_{i-1}}$ we have $\Pr(B_i)=o(n^{-99})$, completing the proof of Claim~\ref{Claim}, and hence 
of Theorem~\ref{th_supcrit}.
\end{proof}%


\begin{thebibliography}{10}%



%
\bibitem{BBWcrt} 
S.~Bhamidi, A.~Budhiraja and X.~Wang,
The augmented multiplicative coalescent and critical dynamic random graph models,  
\emph{Probab. Theory Relat. Fields} {\bf 160} (2014), 733--796.%
%

\bibitem{BBWsub} 
S.~Bhamidi, A.~Budhiraja and X.~Wang,
Bounded-size rules: The barely subcritical regime,  
\emph{Combin. Probab. Comput.} {\bf 23} (2014), 505--538.%
%

\bibitem{BBWagg}
S.~Bhamidi, A.~Budhiraja and X.~Wang,
Aggregation models with limited choice and the multiplicative coalescent, 
\emph{Random Struct. Alg.} {\bf 46} (2015), 55--116.%
%

\bibitem{BF01}
T.~Bohman and A.~Frieze.
Avoiding a giant component.
\emph{Random Struct. Alg.}  {\bf 19} (2001), 75--85.%
%

\bibitem{BKgiant}
T.~Bohman and D.~Kravitz, 
Creating a giant component, 
\emph{Combin. Probab. Comput.} {\bf 15} (2006), 489--511.%
%

\bibitem{BB84} 
B.~Bollob\'as,
The evolution of random graphs,
\emph{Trans. Amer. Math. Soc.} {\bf 286} (1984), 257--274.%
%

\bibitem{rg_bp} 
B.~Bollob\'as and O.~Riordan, 
Random graphs and branching processes, in 
\emph{Handbook of large-scale random networks}, 
Bolyai Soc. Math. Stud {\bf 18}, 
B.~Bollob\'as, R. Kozma and D.~Mikl\'os eds (2009), pp. 15--115.%
%

\bibitem{BRsimple} 
B.~Bollob\'as and O.~Riordan, 
A simple branching process approach to the phase transition in $G_{n,p}$, 
\emph{Electron. J. Combinatorics} {\bf 19} (2012), P21 (8 pp.)%
%

\bibitem{DKP} 
M.~Drmota, M.~Kang and K.~Panagiotou,
Pursuing the giant in random graph processes  (2013). 
Available at\\ https://www.dmg.tuwien.ac.at/drmota/01-universal.pdf.%
%

\bibitem{ELMPSconn}
H.~Einarsson, J.~Lengler, F.~Mousset, K.~Panagiotou and A.~Steger,
Connectivity thresholds for bounded size rules, 
\emph{Ann. Appl. Probab.} {\bf 26} (2016), 3206--3250.%
%

\bibitem{ER60} 
P.~Erd\H os and A.~R\'enyi, 
On the evolution of random graphs, 
\emph{Magyar Tud. Akad. Mat. Kutat\'o Int. K\"ozl.} {\bf 5} (1960), 17--61.%
%

\bibitem{JRW18}
S.~Janson, O.~Riordan and L.~Warnke,
Sesqui-type branching processes,
\emph{Stochastic Processes and their Applications} {\bf 128} (2018), 3628--3655.%
%

\bibitem{JS12} 
S.~Janson and J.~Spencer, 
Phase transitions for modified Erd\H os--R\'enyi processes, 
\emph{Ark. Mat.} {\bf 50} (2012), no. 2, 305--329.%
%

\bibitem{KPconn}
M.~Kang and K.~Panagiotou, 
On the connectivity threshold of Achlioptas processes,
\emph{J. Comb.} {\bf 5 } (2014), 291--304.%
%

\bibitem{KPS} 
M.~Kang, W.~Perkins and J.~Spencer, 
The Bohman--Frieze process near criticality, 
\emph{Random Struct. Alg.} {\bf 43} (2013), 221--250.%
%

\bibitem{KPSconjecture}
M.~Kang, W.~Perkins and J.~Spencer, 
Erratum to \lq\lq The Bohman--Frieze process near criticality\rq\rq,
\emph{Random Struct. Alg.} {\bf 46} (2015), 801.%
%

\bibitem{KLSsub}
M.~Krivelevich, P.~Loh  and B.~Sudakov, 
Avoiding small subgraphs in Achlioptas processes, 
\emph{Random Struct. Alg.} {\bf 34} (2009), 165--195.%
%

\bibitem{KLSHam} 
M.~Krivelevich, E.~Lubetzky and B.~Sudakov, 
Hamiltonicity thresholds in Achlioptas processes, 
\emph{Random Struct. Alg.} {\bf 37} (2010), 1--24.%
%

\bibitem{Luczak90} 
T.~{\L}uczak, 
Component behavior near the critical point of the random graph process, 
\emph{Random Struct. Alg.} {\bf 1} (1990), 287--310.%
%

\bibitem{RWsci} 
O.~Riordan and L.~Warnke, 
Explosive percolation is continuous, 
\emph{Science} {\bf 333} (2011), 322--324.%
%

\bibitem{RW12} 
O.~Riordan and L.~Warnke, 
Achlioptas process phase transitions are continuous, 
\emph{Ann. Appl. Probab.} {\bf 22} (2012), 1450--1464.%
%

\bibitem{RWself} 
O.~Riordan and L.~Warnke, 
Achlioptas processes are not always self-averaging, 
\emph{Physical Review E} {\bf 86} (2012), 011129.
%


\bibitem{RWsub}
O.~Riordan and L.~Warnke, 
The evolution of subcritical {A}chlioptas processes, 
\emph{Random Struct. Alg.} {\bf 47} (2015), 174--203.%
%

\bibitem{RW13} 
O.~Riordan and L.~Warnke, 
Convergence of Achlioptas processes via differential equations with unique solutions, 
\emph{Combin. Probab. Comput.} {\bf 25} (2016), 154--171.%
%


\bibitem{RW17}
O.~Riordan and L.~Warnke, 
The phase transition in bounded-size Achlioptas processes (2017), to appear in \emph{Memoirs of the AMS}.
Available at https://arxiv.org/abs/1704.08714.%
%

\bibitem{SS} 
S.~Sen, 
On the largest component in the subcritical regime of the Bohman--Frieze process,  
\emph{Electron. Commun. Probab.} {\bf 21} (2016), paper no. 64, 15 pp.%
%

\bibitem{SW07} 
J.~Spencer and N.~Wormald, 
Birth control for giants, 
\emph{Combinatorica} {\bf 27} (2007), 587--628.%
%

\bibitem{W99} 
N.~Wormald, 
The differential equation method for random graph processes and greedy algorithms, 
in \emph{Lectures on Approximation and Randomized Algorithms} (M. Karonski and H.J. Pr\"omel, eds), pp. 73--155. PWN, Warsaw, 1999.%
%

\end{thebibliography}
\end{document}